\theoremstyle{thmstyleone}%
\newtheorem{theorem}{Theorem}
\theoremstyle{thmstyletwo}%
\theoremstyle{thmstylethree}%
\newtheorem{lemma}{Lemma}
\begin{document}

\title[Article Title]{The Non-zonal Rossby-Haurwitz Solutions of the 2D Euler Equations on a Rotating Ellipsoid}


\author*{\fnm{Chenghao} \sur{Xu}}\email{cx474@nyu.edu}

\affil{\orgdiv{Courant Institute of Mathematical Sciences}, \orgname{New York University}, \orgaddress{\street{251 Mercer Street}, \city{New York}, \postcode{10012}, \state{New York}, \country{USA}}}


\abstract{
    In this article, we investigate the incompressible 2D Euler equations on a rotating biaxial ellipsoid, which model the dynamics of the atmosphere of a Jovian planet. 
    We study the non-zonal Rossby-Haurwitz solutions of the Euler equations on an ellipsoid, while previous works only considered the case of a sphere.  
    Our main results include: 
    the existence and uniqueness of the stationary Rossby-Haurwitz solutions; 
    the construction of the traveling-wave solutions; 
    and the demonstration of the Lyapunov instability of both the stationary and the traveling-wave solutions.
    }

\keywords{inviscid flow, rotating biaxial ellipsoid, Euler equations, Rossby-Haurwitz solutions}



\maketitle

\bmhead{Acknowledgements}
The author is grateful to Professor Pierre Germain for introducing the topic and providing illuminating suggestions.
The author thanks Professor Vlad Vicol for his help with article organization.
The author is also grateful to Professor Katherine Zhiyuan Zhang for her consulting assistance.

\section{Introduction}

The incompressible Euler equations on a rotating 2D manifold $M$ embedded in $\mathbb{R}^3$ has been widely-studied. In this setting, the Euler equations are taken as a model of the behavior of the atmosphere of a rotating planet.\footnote{\label{note1}It is an interesting topic for further investigation to derive the 2D Euler equations from a physics perspective on a rotating ellipsoid that models a Jovian planet, or on a more general manifold. 
The case that has been done in the literature is when $M$ is a sphere (see Constantin and Germain~\cite{pierre2022}, Constantin and Johnson~\cite{constantin2022propagation}, Gill~\cite{gill1982atmosphere}).
Even though, scholars have taken the 2D Euler equations as the governing equations of planetary atmospheric flows in the case when $M$ is an ellipsoid (see Tauchi and Yoneda~\cite{tauchi2022existence}) or a rotationally symmetric manifold (see Talyor~\cite{taylor2016euler}).}
Of great interest is the analysis of the solutions of the Euler equations, together with their stability properties. 

Taking $M$ as a perfect sphere seems to be the most natural approach (see Cheng and Mahalov~\cite{cheng2013euler}, Constantin and Germain~\cite{pierre2022}). 
In this case, two classes of solutions are well-studied in the literature, which are zonal solutions and non-zonal Rossby-Haurwitz solutions. 

Zonal solutions represent the arrangement of the atmospheric band structure of outer planets, which consists of alternating westward and eastward winds. 
The complex atmospheric dynamics of a Jovian planet can be viewed as a background zonal solution presenting fluctuations: some stable, while other are unstable and may develop wildly over time. 
Studying the stability properties of these zonal solutions may offer a physical insight into the atmosphere science.
Zonal solutions are automatically stationary and their stability properties are developed and stated for instance in Constantin and Germain~\cite{pierre2022}, or Marchiorio and Pulvirenti~\cite{marchioro2012mathematical}.
Moreover, due to the rich symmetry of the sphere, some non-zonal solutions can be obtained from zonal solutions by utilizing the invariance of the stationary Euler equations through the action of $\mathbb{O}(3)$.

On a unit sphere, classical non-zonal Rossby-Haurwitz solutions are complete nonlinear and non-trivial solutions, which were first found by Craig~\cite{craig1945solution}, of the Euler equations, obtained by Rossby~\cite{rossby1939relation} and Haurwitz~\cite{haurwitz1940motion}. 
Non-zonal Rossby-Haurwitz solutions can be either stationary or traveling, with the latter being derived from the former.
They are also the only known non-trivial solutions of the Euler equations with explicit expressions.
It is widely recognized that the corresponding Rossby-Haurwitz waves contribute significantly to the atmosphere dynamics. 
For instance, some non-zonal Rossby-Haurwitz waves of degree $2$ can be predominant in the atmosphere on a Jovian planet (see Dowling~\cite{dowling1995dynamics}). 
Their stability properties are crucial. 
For example, one of the main reasons for the difficulty in making accurate long-term weather forecasts is the instability of these waves (see Bénard~\cite{benard2020stability}).
In particular, Constantin and Germain~\cite{pierre2022} studied the non-zonal Rossby-Haurwitz solutions and proved their Lyapunov instability. 
Furthermore, owning to the abundant symmetries of the sphere, Cao, Wang, and Zuo~\cite{cao2023stability} proved that all the Rossby-Haurwitz solutions of degree 2 are orbitally stable.

However, modeling a real-world planet, even one with small eccentricity like the Earth, as a perfect sphere is inaccurate. An ellipsoidal model can provide a much more accurate representation of the oblate-spherical geometry.
Along this research direction, Constantin and Johnson~\cite{constantin2021propagation}~\cite{constantin2021modelling} derived the leading-order 3D compressible Navier–Stokes equations of the atmospheric flows on a rotating ellipsoid, via a thin-shell approximation based on the Earth's atmospheric and geographical data. 
These works may inspire future research on deriving the 2D Euler equations on a rotating ellipsoid for modeling an outer planet (see relation to footnote~\footref{note1}).

Furthermore, a spherical model may diverge significantly from the actual shape of outer planets such as Jupiter or Saturn.
A fast rotating Jovian planet usually has a relatively large flattening rate.
I.e., Jupiter deviates much from a perfect sphere by flattening at the poles and bulging at the equator (see Berardo and Wit~\cite{berardo2022effects}). 
Saturn has a large flattening rate about 0.1 (see Elkins-Tanton~\cite{elkins2006jupiter}), and Haziot~\cite{haziot2022spherical} revealed that a spherical model turned to be unsuitable for flows on Saturn.
Therefore, it is necessary to use a biaxial ellipsoid model that provides a better approximation of the shape of an outer planet. 
Sometimes, an ellipsoidal model can make a crucial difference. 
For instance, Tauchi and Yoneda~\cite{tauchi2022existence} studied the mechanisms behind stable multiple zonal jet flows, such as the famous Great Red Spot on Jupiter, by analyzing the 2D Euler equations on an ellipsoid from a differential geometry perspective.
However, their arguments do not hold when applied to a sphere. 
Additionally, Taylor~\cite{taylor2016euler} studied the 2D Euler equations on a general rotationally symmetric manifold to gain a better understanding of the planetary atmosphere, with a particular focus on the stability analysis of the zonal solutions.
Inspired by these works, this article investigates the case when $M$ is an ellipsoid and considers the 2D Euler equations~\eqref{Ew} as the governing equations.
As expected, the equation~\eqref{Ew} coincides with the 2D Euler equations in Taylor~\cite{taylor2016euler}.

\begin{figure}[h]%
\centering
\includegraphics[width=0.9\textwidth]{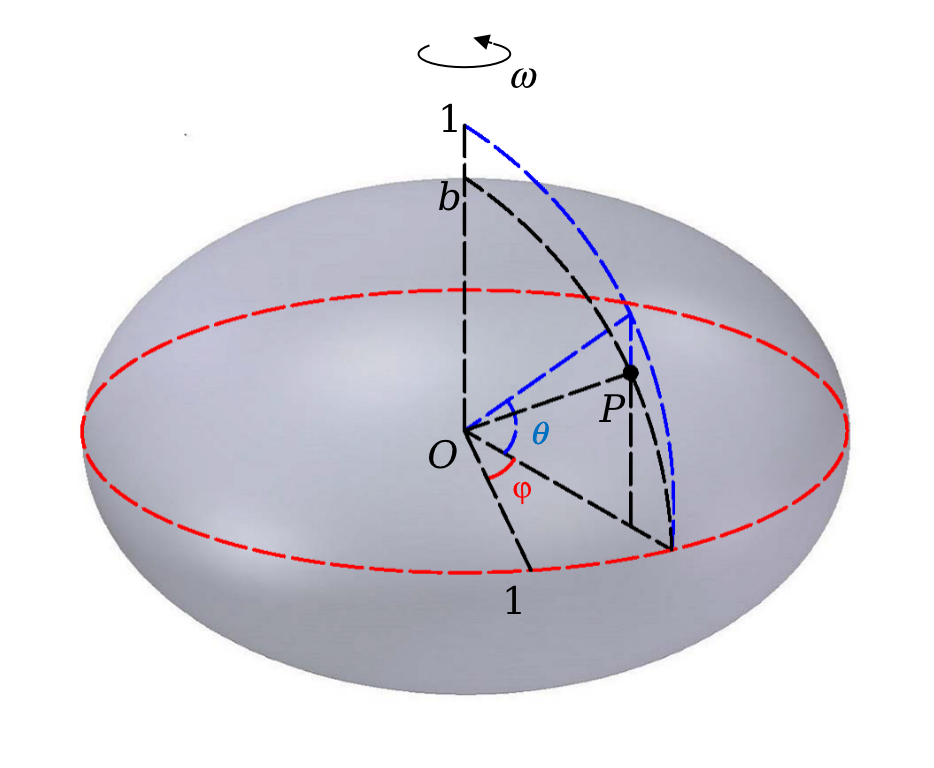}
\caption{A biaxial ellipsoid rotating with angular velocity $\omega$, whose major axis equals 1 and minor axis equals $b<1$. A point $P$ on its surface can be parameterized by $(\varphi,\theta)$. The equator is on $\theta=0$; the North pole is at $\theta=\frac{\pi}{2}$, while the South pole is at $\theta=-\frac{\pi}{2}$.}\label{Figure1}
\end{figure}

On the surface of a rotating biaxial ellipsoid (see Figure~\ref{Figure1}), with the major axis equal to $1$ and the minor axis equal to $b < 1$, the incompressible Euler equations can be expressed in terms of the stream function $\psi$ as
\begin{equation}\label{Ew}
     \left( \partial_t + \frac{1}{\cos\theta\sqrt{\sin^2\theta+b^2\cos^2\theta}}[-\partial_\theta\psi\partial_\varphi + \partial_\varphi\psi\partial_\theta] \right) \left(\Delta\psi+\frac{2\omega\sin\theta}{\sqrt{\sin^2\theta+b^2\cos^2\theta}}\right) 
     =0. \nonumber \tag{$\mathcal{E}_\omega$}
\end{equation}
Here, the polar coordinate $(\varphi, \theta) \in \left[-\pi,\pi\right)\times\left[-\frac{\pi}{2}, \frac{\pi}{2}\right]$ is used to parameterize the surface $\mathbb{S}^2$ as $(x,y,z) = (\cos\varphi\cos\theta, \sin\varphi\cos\theta, b\sin\theta)$. 
In equation~\eqref{Ew}, $\omega$ is the rotating speed of the ellipsoid and $\Delta$ is the Laplace-Beltrami operator on $\mathbb{S}^2$. 
Consequently, the stationary Euler equations become
\begin{equation}\label{Ew_stationary1}
     [-\partial_\theta\psi\partial_\varphi + \partial_\varphi\psi\partial_\theta]\left(\Delta\psi+\frac{2\omega\sin\theta}{\sqrt{\sin^2\theta+b^2\cos^2\theta}}\right)=0.
\end{equation}
Notice that both local and global well-posedness on $H^s$ $(s\geq 2)$ can be guaranteed (see Taylor~\cite{taylor2016euler}). 

On an ellipsoid, the stream function $\psi$ of a zonal solution only depends on the latitude angle $\theta$, namely, $\psi = \psi (\theta)$.
Notice that any zonal solution is stationary and solves~\eqref{Ew_stationary1}.
Taking $M$ to be a rotationally symmetric surface, Talyor~\cite{taylor2016euler} proved the stability results for zonal solutions, including both linear stability criteria (Rayleigh’s and Fjortoft’s) and nonlinear stability criterion (Arnold's).
As a special case, these properties can be inherited by an ellipsoid.

Nevertheless, non-zonal Rossby-Haurwitz solutions of the 2D Euler equations on a rotating ellipsoid were not much studied in the literature.
Thus, we establish theories and analyze the stability properties of these solutions in this article.

In the ellipsoidal setting, we analogously propose stationary non-zonal Rossby-Haurwitz solutions to be
\begin{equation}\label{RW_solu_intro}
    \psi = g(\theta) + Y_l^m(\varphi, \theta), 
\end{equation}
which solves~\eqref{Ew_stationary1}.
Here, $g(\theta)$ is some specific function in $C^{3,\alpha}\left(\left(-\frac{\pi}{2},\frac{\pi}{2}\right)\right)$ (see Section~\ref{Existence}), and $Y_l^m(\varphi, \theta)$ belongs to the $(l,m)$-th eigenspace of $-\Delta$ associated with eigenvalue $\lambda_{l,m}$, where $l \in \mathbb{N}$ and $m \in \{-l,\dots,l\}\backslash\{0\}$ (see Section~\ref{Laplace_elli}). 
We will show the existence, uniqueness and Lyapunov instability of the proposed solution in Section~\ref{Sec_Rossby}. 
Notice that when $b=1$, these solutions reduce to classical Rossby-Haurwitz solutions on a sphere. In this case, $g(\theta)$ becomes $\alpha\sin\theta$ for some constant $\alpha$, and $Y^m_l$ reduces to a linear combination of spherical harmonics of degree $l$ (see Section~\ref{Rossby_sphere}).  

Travelling-wave Rossby-Haurwitz solutions can be constructed through non-zonal stationary Rossby-Haurwitz solutions. Specifically, the solutions traveling with speed $c$ are constructed to be
\[
    \psi_c(\varphi, \theta ,t) = g(\theta) + c \lambda_{l,m} f(\theta) + Y_l^m(\varphi-ct, \theta),
\]
where $f(\theta)$ is some specific function in $C^{3,\alpha}\left(\left(-\frac{\pi}{2},\frac{\pi}{2}\right)\right)$, which is formally defined in~\eqref{Integral_f}.
As expected, these time dependent solutions solve the Euler equations~\eqref{Ew}. For a given $c$ and $Y_l^m$, we will demonstrate the existence and uniqueness of the traveling-wave solution $\psi_c$ and its Lyapunov instability in Section~\ref{Sec_Rossby}.

\subsection{Main results}

The stream functions of stationary Rossby-Haurwitz solutions on a sphere are
\[
\psi = \alpha\sin\theta + Y_l(\varphi, \theta), \quad \alpha = \frac{2\omega}{2-l(l+1)},
\]
where $Y_l$ belongs to the $l^{th}$ eigenspace of $\Delta$ on a unit sphere ($l\geq2$). However, they fail to be stationary solutions of~\eqref{Ew} on an ellipsoid. 
Naturally, we propose a modification 
\begin{equation}\label{main_res_propose}
    \psi = g(\theta) + Y_l^m(\varphi, \theta), \quad \underset{\theta\to\pm\frac{\pi}{2}}{\lim} g'(\theta)= 0,
\end{equation}
where $g(\theta) \in C^{3,\alpha}\left(\left(-\frac{\pi}{2},\frac{\pi}{2}\right)\right)$ and $Y_l^m$ belongs to the $(l,m)$-th eigenspace of $\Delta$ on $\mathbb{S}^2$ (see Section~\ref{propose_elli_Rossby}) to be the solutions of the stationary Euler equations~\eqref{Ew_stationary1}. 

In Theorem~\ref{thm_existence}, we prove the existence of $g$ in~\eqref{RW_solu_intro} that solves the ODE~\eqref{ODE1} with the boundary conditions~\eqref{boud_cond}. As a result, $\psi = g(\theta) + Y_l^m(\varphi,\theta)$ solves the stationary Euler equations~\eqref{Ew_stationary1}
\[
[-\partial_\theta\psi\partial_\varphi + \partial_\varphi\psi\partial_\theta]\left(\Delta\psi+\frac{2\omega\sin\theta}{\sqrt{\sin^2\theta+b^2\cos^2\theta}}\right)=0,
\]
with the boundary conditions~\eqref{Boundary_condition_psi}. 

Unlike on a unit sphere, there is no exact formula for Rossby-Haurwitz solutions on an ellipsoid. 
Instead, the proof of the existence requires solving a second order ODE~\eqref{ODE1} on $\left(-\frac{\pi}{2}, \frac{\pi}{2}\right)$ with regular singularities and Neumann conditions on both boundaries. 
The approach is to rewrite the ODE as a Volterra integral equation (VIE) and utilize the Variation-of-Constants formula to show the existence of a smooth solution on $\left(-\frac{\pi}{2}, 0\right]$ that vanishes at the origin and satisfies the left Neumann condition. 
Then through an odd extension, a solution can be constructed on the whole interval. 
Lastly, the smoothness of the solution at the origin can be guaranteed by exploiting a special property of the ODE, i.e. if a solution vanishes at the origin, its second derivative also vanishes at the origin.

In Theorem~\ref{thm_uniquess}, we prove the uniqueness (up to a constant) of $\psi = g(\theta) + Y_l^m(\varphi,\theta)$ for a given $Y_l^m$. As a result, the corresponding velocity field $U=J \operatorname{grad}\psi$ is unique. 

The uniqueness can be proved by contradiction. 
Specifically, under the non-uniqueness assumption, the difference (up to a constant) of two solutions $\Tilde{g}(\theta)$ must fall into the $(l,m)$-th eigenspace of $\Delta$. However, this is not true by the spectral theory of $\Delta$ on $\mathbb{S}^2$. 

In Theorem~\ref{thm_travelling}, we construct the travelling-wave Rossby-Haurwitz solutions
\begin{equation}\label{Travelling_main_results}
    \psi_c(\varphi, \theta ,t) = g(\theta) + c \lambda_{l,m} f(\theta) + Y_l^m(\varphi-ct, \theta),
\end{equation}
that solve the Euler equations~\eqref{Ew}, where $c\in\mathbb{R}$ is the speed and $f$ is some function defined in~\eqref{Integral_f}.

We seek for non-stationary solutions of~\eqref{Ew} travelling with speed $c$. Plugging $\psi_c(\varphi,\theta,t)$ of the form~\eqref{Travelling_main_results} into the Euler equations~\eqref{Ew}, we end up with a complicated second order ODE~\eqref{Integral_f} that is similar to the ODE~\eqref{ODE1} in Theorem~\ref{thm_existence}. Thus, the ODE~\eqref{Integral_f} can be solved by a similar approach and the solution is $f$.

In Theorem~\ref{thm_tra_uniq}, we state the uniqueness of the travelling-wave solution for a given speed $c$ and function $Y_l^m$.
The uniqueness can be proved in a similar way as in Theorem~\ref{thm_uniquess}, which is straightforward and thus omitted in this article. 

In Theorem~\ref{Ins_Ross}, we prove the Lyapunov instability for both travelling-wave and stationary ($c=0$) non-zonal Rossby-Haurwitz solutions. 
Specifically, for a given solution $\psi_c$, a sequence of travelling-wave solutions ${\psi^n_c}$ with initial data ${\psi^n_c}(0) \to \psi_c(0)$ are constructed, such that
\[
\underset{n\to\infty}{\liminf} \left\{ \sup_{t>0}\left\|{\psi^n_c}(t)-\psi_c(t)\right\|^2_{L^2(\mathbb{S}^2,d\sigma)} \right\} \geq \epsilon > 0.
\]
for some positive $\epsilon$.

The constructed solutions ${\psi^n_c}(t)$ travel with speed $c+\frac{1}{n}$, which exceed $\psi_c(t)$ a little bit. 
By formula~\eqref{Travelling_main_results}, one can show the initial data $\psi^n_c(0)$ converge to $\psi_c(0)$. 
However, although the traveling speeds also converge to $c$, the tiny differences will be amplified by time $t$, which leads to the Lyapunov instability.
This can be shown by expanding both ${\psi_c^n}(t)$ and $\psi_c(t)$ in terms of the basis of the eigenspace of $\Delta$ and choosing a proper time $t$.

\subsection{Organization of this article}

In Section~\ref{Sec_2}, we start with deriving the Euler equations~\eqref{Ew} on a rotating ellipsoid. 
In Section~\ref{Laplace}, we present the spectral theory of $\Delta$ on a sphere and on a biaxial ellipsoid. 
After a briefly introduction of classical Rossby-Haurwitz solutions on a sphere (see Section~\ref{Rossby_sphere}), we innovatively propose stationary Rossby-Haurwitz solutions on a rotating ellipsoid (see Section~\ref{propose_elli_Rossby}). 
Then, we demonstrate their existence (in Section~\ref{Existence}) and uniqueness (in Section~\ref{Uniqueness}). 
In Section~\ref{sec_tra}, we construct the travelling-wave Rossby-Haurwitz solutions and state the uniqueness. 
Finally, in Section~\ref{sec_instability}, we show the Lyapunov instability of the non-zonal Rossby-Haurwitz solutions.

\section{Derivation of the incompressible Euler equations on a rotating ellipsoid}\label{Sec_2}

For a biaxial ellipsoid with major axis $1$ and minor axis $b$ $(b<1)$, the standard coordinate chart we use here is 
\begin{equation}\label{coor_chart}
    (\varphi, \theta) \in (-\pi,\pi)\times\left(-\frac{\pi}{2},\frac{\pi}{2}\right) \mapsto (\cos\varphi\cos\theta, \sin\varphi\cos\theta, b\sin\theta) \in \mathbb{S}^2.
\end{equation}
The singularity introduced by the coordinate chart at the poles can be resolved by taking smoothness into account (see relation~\eqref{boundary_condition_g}). The double-valued ambiguity at $\varphi=\pm\pi$ can be handled by assuming a periodic dependence on variable $\varphi$ (see relation~\eqref{boundary_condition_phi}). 

For $p \in \mathbb{S}^2\backslash\{N,S\}$, the tangent space $T_p\mathbb{S}^2$ has a basis $\left\{\mathbf{e_\varphi}, \mathbf{e_\theta}\right\}$ which is
\[
\left\{\frac{1}{\cos\theta}\partial_\varphi, \frac{1}{\sqrt{\sin^2\theta + b^2\cos^2\theta}}\partial_\theta \right\}.
\]
Correspondingly, the Riemannian volume element becomes
\[
d\sigma = \cos\theta\sqrt{\sin^2\theta + b^2\cos^2\theta} d\varphi d\theta.
\]

In this framework, for function $\psi: \mathbb{S}^2 \mapsto \mathbb{R}$ and the velocity field $U = u(\varphi,\theta) \mathbf{e_\varphi} + v(\varphi,\theta) \mathbf{e_\theta}$, the basic operators have expressions as below:
\[
\text{grad }\psi = \frac{\partial_\varphi\psi}{\cos\theta}\mathbf{e_\varphi} + \frac{\partial_\theta\psi}{\sqrt{\sin^2\theta + b^2\cos^2\theta}}\mathbf{e_\theta} 
\]
\[
\nabla\cdot U = \frac{1}{\cos\theta}\partial_\varphi u + \frac{1}{\cos\theta\sqrt{\sin^2\theta + b^2\cos^2\theta}}\partial_\theta(\cos\theta v)
\]
\begin{equation}\label{Delta_formula}
    \Delta\psi = \frac{1}{\cos^2\theta}\partial_{\varphi\varphi}\psi - \frac{\tan\theta}{(\sin^2\theta + b^2\cos^2\theta)^2}\partial_\theta\psi + \frac{1}{\sin^2\theta + b^2\cos^2\theta}\partial_{\theta\theta}\psi.
\end{equation}
For a path $c(t)$ on the surface $\mathbb{S}^2$, \[\frac{d}{dt}\psi\circ c(t) = \operatorname{grad} \psi\cdot c'(t)\] gives a definition for the gradient. The formula for divergence comes from duality (see Richtmyer and Burdorf~\cite{richtmyer1978principles}), while
the Laplace–Beltrami operator $\Delta$ can be computed through the Voss-Weyl formula 
\[
\Delta=\frac{1}{\sqrt{|\operatorname{det}(g)|}} \sum_{i, j=1}^2 \frac{\partial}{\partial x^i}\left(g^{i j} \sqrt{|\operatorname{det}(g)|} \frac{\partial}{\partial x^j}\right),
\] 
where local coordinates $\left(x^1, x^2\right)=\left(\varphi, \theta\right)$ and $\left(g^{i j}\right)$ is the inverse of the Riemannian metric $g=\left(g_{i j}\right)$ in this coordinate system (see Grinfeld~\cite{grinfeld2013introduction}). 

Define the stream function $\psi(\varphi,\theta)$ such that 
\begin{equation}\label{def_psi}
    U = (u, v)^T = J \operatorname{grad}\psi = \left(-\frac{1}{\sqrt{\sin^2\theta + b^2\cos^2\theta}}\partial_\theta\psi, \quad\frac{1}{\cos\theta}\partial_\varphi\psi\right)^T,
\end{equation}
where $J$ is the counter-clockwise $90^\circ$ rotation matrix. The vorticity is $\Omega = \Delta\psi$, and the material derivative has the expression
\[
D_t = \partial_t + (U\cdot\nabla_{U}) = \partial_t + u\nabla_\mathbf{e_\varphi} + v\nabla_\mathbf{e_\theta}.
\]
When applying $D_t$ on the vorticity $\Delta\psi$, the formula becomes
\[
D_t\Delta\psi = \left( \partial_t + \frac{1}{\cos\theta\sqrt{\sin^2\theta+b^2\cos^2\theta}}[-\partial_\theta\psi\partial_\varphi + \partial_\varphi\psi\partial_\theta] \right) \Delta\psi.
\]
The Euler equations on the surface of an ellipsoid rotating with angular velocity $\omega$ can be written as
\begin{equation}\label{Ew_2}
     \left( \partial_t + \frac{1}{\cos\theta\sqrt{\sin^2\theta+b^2\cos^2\theta}}[-\partial_\theta\psi\partial_\varphi + \partial_\varphi\psi\partial_\theta] \right) \left(\Delta\psi+\frac{2\omega\sin\theta}{\sqrt{\sin^2\theta+b^2\cos^2\theta}}\right)
     =0 \tag{$\mathcal{E}_\omega$}.
\end{equation}
The equation~\eqref{Ew} coincides with the one in Taylor~\cite{taylor2016euler}, where more details about the Euler equations can be found.
As a consequence, the stationary Euler equations reduces to 
\[
[-\partial_\theta\psi\partial_\varphi + \partial_\varphi\psi\partial_\theta](\Delta\psi+\frac{2\omega\sin\theta}{\sqrt{\sin^2\theta+b^2\cos^2\theta}})=0.
\]

The coordinate chart in~\eqref{coor_chart} and the definition of the stream function $\psi$ in~\eqref{def_psi} bring in the artificial singularities at the North and South poles. For any $C^1$ function $\psi$, the continuity of velocity field $U$ at $N$ and $S$ implies 
\begin{equation}\label{boundary_condition_g}
    \lim_{\theta\to\pm\frac{\pi}{2}} \partial_{\varphi}\psi(\varphi,\theta) = 0.
\end{equation}
A periodic condition is also imposed on $\psi$, namely, 
\begin{equation}\label{boundary_condition_phi}
    \psi(\varphi,\theta) = \psi(\varphi+2\pi,\theta), 
\end{equation}
which ensures the existence of $\partial_\varphi\psi$ globally (see Constantin and Germain~\cite{pierre2022}). 

The Euler equations in terms of the velocity field $U$ becomes
\[
\left\{\begin{array}{l}D_t U+\frac{2 \omega \sin \theta}{\sqrt{\sin^2\theta+b^2\cos^2\theta}}JU=-\operatorname{grad} p \\ \operatorname{div} U=0\end{array}\right.
\]
where $p$ is the pressure field.

It is feasible to recover the velocity $U$ directly from the vorticity $\Omega$, though not locally. The method is stated in Dritschel and Boatto~\cite{dritschel2015motion}.

\section{Laplace-Beltrami operator}\label{Laplace}

In this part, we will introduce and present the spectral theorem of the Laplace-Beltrami operator $\Delta$ on a sphere and on an ellipsoid, which serves as the foundation of the Rossby-Haurwitz solutions.

\subsection{On a unit sphere} 

A spherical coordinate chart writes
\[
(\varphi, \theta) \in(-\pi, \pi) \times\left(-\frac{\pi}{2}, \frac{\pi}{2}\right) \mapsto(-\cos \varphi \cos \theta, \sin \varphi \cos \theta, \sin \theta).
\]
The Laplace-Beltrami operator $\Delta$ applied on a scalar function $\psi(\varphi,\theta)$ becomes 
\[
\Delta\psi = \frac{1}{\cos^2\theta}\partial_{\varphi\varphi}\psi - \tan\theta\partial_\theta\psi + \partial_{\theta\theta}\psi.
\]
The eigenvalues of $-\Delta$ on a unit sphere are $\{j(j+1), j \in \mathbb{N}\}$. Each corresponding eigenspace $\mathbb{E}_j$ is of dimension $(2j+1)$, of which the basis consists of the spherical harmonics
\[
X_j^m(\varphi, \theta)=(-1)^m \sqrt{\frac{(2 j+1)(j-m) !}{4 \pi(j+m) !}} P_j^m(\sin \theta) \mathrm{e}^{\mathrm{i} m \varphi}, \quad m=-j, \ldots, j,
\]
where $P_j^m$ is the associated Legendre polynomials given by
\[
P_j^m(x)=\frac{1}{2^j j !}\left(1-x^2\right)^{m / 2} \frac{\mathrm{d}^{j+m}}{\mathrm{~d}^{j+m} x}\left(x^2-1\right)^j, \quad m=-j, \ldots, j.
\]
Notice the symmetry 
\[
X_j^{-m} = (-1)^m \overline{X_j^m}
\]
and $X_j^{0}$ is zonal. 
Moreover, the spherical harmonics are orthonormal with respect to the inner product 
\[
\langle f_1, f_2 \rangle = \iint_{\mathbb{B}^2}f_1 \overline{f_2} d\sigma_B,
\]
where $\mathbb{B}^2$ stands for the unit sphere and the Riemannian volume element on $\mathbb{B}^2$ is
\[
d\sigma_B = \cos\theta d\varphi d\theta.
\]
More discussions about spherical harmonics can be found in Lea~\cite{lea2004mathematics}, Tung~\cite{tung1985group}, Constantin and Germain~\cite{pierre2022}.

\subsection{On a biaxial ellipsoid}\label{Laplace_elli}

The Laplace-Beltrami operator on the surface of an ellipsoid has been widely studied (see Pankratova~\cite{pankratova1970eigenfunctions}, Eswarathasan and Kolokolnikov~\cite{eswarathasan2022laplace}). 
In our setting, when applied on $\psi$, it becomes
\[
\Delta\psi = \frac{1}{\cos^2\theta}\partial_{\varphi\varphi}\psi - \frac{\tan\theta}{(\sin^2\theta + b^2\cos^2\theta)^2}\partial_\theta\psi + \frac{1}{\sin^2\theta + b^2\cos^2\theta}\partial_{\theta\theta}\psi.
\]
Here, we present the spectral theory for $\Delta$ on a biaxial ellipsoid close to a unit sphere provided by Eswarathasan and Kolokolnikov~\cite{eswarathasan2022laplace}.

\begin{lemma}\label{eig_Laplace}
     Let $L \in \mathbb{N}$ and $\beta \in \mathbb{R}\backslash\{0\}$. Consider the biaxial ellipsoid (major axis = 1; minor axis = b) where $b=1+\varepsilon \beta$ for $\varepsilon \in \mathbb{R}^{+}$ and $g_{\varepsilon}$ the metric from $\mathbb{R}^3$ restricted to the ellipsoid. 
     
     Then there exists $\varepsilon_0$ such that for all $\varepsilon<\varepsilon_0$ and $\Lambda \in \operatorname{spec}\left(-\Delta_g\right) \cap[0, L(L+1)]$, we have
\[
\Lambda=l(l+1)+\varepsilon \Lambda_1+O\left(\varepsilon^2\right)
\]

for $l=0,1,2, \ldots L$ and $m=-l, \ldots, l$ with $\Lambda_1$ being given by the explicit formula
\[
\Lambda_1 = (-\beta) \frac{2 l(l+1)}{(2 l+3)(2 l-1)}\left(2 l^2-2 m^2+2 l-1\right).
\]
Moreover, each $\Lambda$ has multiplicity two except for those whose expansion has $m=0$, which in this case corresponds to multiplicity one.
\end{lemma}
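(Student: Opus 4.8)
The plan is to treat $-\Delta_{g_\varepsilon}$ as an analytic perturbation, in the parameter $\varepsilon$, of the Laplace--Beltrami operator $-\Delta_0$ on the unit sphere $\mathbb{B}^2$, and to compute the first-order splitting of each degenerate eigenvalue $l(l+1)$ by degenerate Rayleigh--Schr\"odinger perturbation theory. First I would expand the operator. Since $b = 1 + \varepsilon\beta$, we have $\sin^2\theta + b^2\cos^2\theta = 1 + 2\varepsilon\beta\cos^2\theta + O(\varepsilon^2)$, and substituting into~\eqref{Delta_formula} gives
\[
-\Delta_{g_\varepsilon} = -\Delta_0 + \varepsilon\,A_1 + O(\varepsilon^2), \qquad A_1\psi = 2\beta\cos\theta\left(\cos\theta\,\partial_{\theta\theta}\psi - 2\sin\theta\,\partial_\theta\psi\right),
\]
where $-\Delta_0$ is the spherical Laplacian, whose eigenvalues $l(l+1)$ and eigenspaces $\mathbb{E}_l = \operatorname{span}\{X_l^m : m=-l,\dots,l\}$ are recalled above.

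One subtlety is that $-\Delta_{g_\varepsilon}$ is self-adjoint on $L^2(\mathbb{S}^2, d\sigma_\varepsilon)$, whose volume element $d\sigma_\varepsilon = \cos\theta\sqrt{\sin^2\theta+b^2\cos^2\theta}\,d\varphi\,d\theta$ itself depends on $\varepsilon$. To set up rigorous perturbation theory on the fixed space $L^2(\mathbb{B}^2, d\sigma_B)$, one may conjugate by the unitary multiplication operator $U_\varepsilon = (d\sigma_\varepsilon/d\sigma_B)^{1/2} = 1 + \tfrac{\varepsilon\beta}{2}\cos^2\theta + O(\varepsilon^2)$; Kato's analytic perturbation theory then yields the expansion $\Lambda = l(l+1) + \varepsilon\Lambda_1 + O(\varepsilon^2)$. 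A convenient shortcut is to work with the Rayleigh quotient $R_\varepsilon[\psi] = -\big(\int\bar\psi\,\Delta_{g_\varepsilon}\psi\,d\sigma_\varepsilon\big)\big/\big(\int|\psi|^2\,d\sigma_\varepsilon\big)$: expanding it on $\mathbb{E}_l$, the first-order variation of the volume element cancels between numerator and denominator, leaving $\Lambda_1$ as the eigenvalues of the quadratic form $M_{mm'} = \langle X_l^m, A_1 X_l^{m'}\rangle_{L^2(d\sigma_B)}$ restricted to $\mathbb{E}_l$. The crucial structural simplification is that $A_1$ and $d\sigma_B$ are independent of $\varphi$, so $A_1$ commutes with $\partial_\varphi$; since $X_l^m$ carries the factor $e^{im\varphi}$, the $\varphi$-integration forces $M_{mm'}=0$ for $m\neq m'$. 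Thus $M$ is already diagonal in the spherical-harmonic basis and $\Lambda_1 = \Lambda_1(l,m) = \langle X_l^m, A_1 X_l^m\rangle$, so no genuine matrix diagonalization is needed.

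Next I would evaluate the diagonal matrix element. Writing $X_l^m = c_{l,m}P_l^m(\sin\theta)e^{im\varphi}$ and integrating out $\varphi$, the remaining one-dimensional integral over $\theta \in (-\tfrac{\pi}{2},\tfrac{\pi}{2})$ reduces, after the substitution $x=\sin\theta$, to integrals of products of the associated Legendre functions $P_l^m(x)$ and their derivatives against polynomial weights. Using the Legendre differential equation to eliminate the second derivative, together with the standard recurrence and orthogonality relations for $P_l^m$ and the explicit normalization constants $c_{l,m}$, this collapses to the closed form
\[
\Lambda_1 = (-\beta)\frac{2l(l+1)}{(2l+3)(2l-1)}\left(2l^2 - 2m^2 + 2l - 1\right).
\]
This explicit Legendre computation is where the real work lies and is the main obstacle; the rest of the argument is structural.

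Finally, the multiplicity statement follows from exact symmetry combined with this formula. The biaxial ellipsoid is invariant under rotations about its symmetry axis (hence $-\Delta_{g_\varepsilon}$ commutes with $\partial_\varphi$ and eigenfunctions carry a definite index $m$) and under the reflection $\varphi \mapsto -\varphi$, which sends $X_l^m$ to a multiple of $X_l^{-m}$; this isometry commutes with $-\Delta_{g_\varepsilon}$ and so forces the eigenvalues for $+m$ and $-m$ to coincide, giving multiplicity at least two when $m\neq 0$ and one when $m=0$. Conversely, since $\Lambda_1(l,m)$ depends on $m$ only through $m^2$ and is strictly monotone in $m^2$ (for $\beta\neq 0$ and $l\geq 1$), distinct values of $|m|$ yield distinct first-order corrections; hence for all sufficiently small $\varepsilon$ the $(2l+1)$-fold degeneracy splits exactly into these $\pm m$ pairs, and the multiplicity is exactly two except in the $m=0$ case.
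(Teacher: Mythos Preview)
The paper does not actually prove this lemma: it is quoted verbatim from Eswarathasan and Kolokolnikov~\cite{eswarathasan2022laplace} and only the statement is reproduced, with no argument given. So there is no ``paper's own proof'' to compare your proposal against.

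That said, your sketch is a correct outline and almost certainly in the spirit of the original source. The expansion of the operator is right (I checked: from~\eqref{Delta_formula} one gets $A_1\psi = 2\beta\cos^2\theta\,\partial_{\theta\theta}\psi - 4\beta\sin\theta\cos\theta\,\partial_\theta\psi$, which matches what you wrote); the Rayleigh-quotient trick for handling the $\varepsilon$-dependent volume form is valid and the cancellation you claim does occur; the observation that $[\partial_\varphi, A_1]=0$ forces the perturbation matrix on $\mathbb{E}_l$ to be diagonal in the $X_l^m$ basis is the key structural point and is correct; and the multiplicity argument via the $\varphi\mapsto -\varphi$ isometry plus strict monotonicity of $\Lambda_1$ in $m^2$ is sound. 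The one place you explicitly leave as an exercise---the Legendre-function integral that produces the closed form for $\Lambda_1$---is indeed the computational core, and if you intend to write this up as a self-contained proof you would need to carry it out (the standard route is to use $(1-x^2)(P_l^m)'' - 2x(P_l^m)' = [m^2/(1-x^2) - l(l+1)]P_l^m$ to remove second derivatives, then the three-term recurrence $xP_l^m = \tfrac{l-m+1}{2l+1}P_{l+1}^m + \tfrac{l+m}{2l+1}P_{l-1}^m$ together with orthogonality). One minor point worth making explicit in the multiplicity paragraph: you should also note that eigenvalues coming from \emph{different} values of $l$ cannot collide for small $\varepsilon$, since the unperturbed gaps $l(l+1)-(l-1)l=2l$ are bounded below on $[0,L(L+1)]$ while the first-order shifts are $O(\varepsilon)$.
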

Furthermore, let $\mathbb{E}_{l,m}$ be the $(l,m)$-th eigenspace of $-\Delta$ associated with eigenvalue $\lambda_{l,m}$. When $m \ne 0$, the basis of $\mathbb{E}_{l,m}$ is of the form
\[
\left\{y_1(\theta)e^{im\varphi},y_2(\theta)e^{-im\varphi}\right\},
\]
for some smooth functions $y_1$ and $y_2$ that depends on $(l,m)$. 
The elements of the basis are orthonormal with respect to the inner product 
\[
\langle f_1, f_2 \rangle = \iint_{\mathbb{S}^2}f_1 \overline{f_2} d\sigma.
\]
Though there is no exact formula for $y_1$ and $y_2$ like in the case of a sphere, an approximation up to $O(\varepsilon^2)$ can be conducted (see Eswarathasan and Kolokolnikov~\cite{eswarathasan2022laplace}).

\section{Non-zonal Rossby-Haurwitz solutions}\label{Sec_Rossby}

\subsection{Rossby-Haurwitz solutions on a rotating sphere}\label{Rossby_sphere}

Extensive research about the classical Rossby-Haurwitz solutions on a rotating sphere has been conducted in the literature (see Craig~\cite{craig1945solution}, Haurwitz~\cite{haurwitz1940motion}, Constantin and Germain~\cite{pierre2022}, Rossby~\cite{rossby1939relation}, Verkley~\cite{verkley1984construction}). 
Here, we briefly state the primary results.

The stream functions of the stationary Rossby-Haurwitz solutions of degree $l$ are
\begin{equation}\label{RW_on_sphere}
    \psi(\varphi,\theta) = \alpha\sin\theta + Y_l(\varphi, \theta),
\end{equation}
where $Y_l$ is in the $l$-th eigenspace of $\Delta$ on a sphere and
\[
\alpha = \frac{2\omega}{2 - l(l+1)}.
\]
Here, we focus on the case when $l \geq 2$. It can be easily verified that $\psi(\varphi,\theta)$ solves the stationary Euler equations~\eqref{Ew_stationary1} in the case $b=1$.
The existence of the stationary solution is trivial since Craig~\cite{craig1945solution} discovered the exact expression utilizing spherical harmonics.

The travelling-wave Rossby-Haurwitz solutions with speed $c$ are of the form
\begin{equation}\label{tra_sphere}
    \psi(\varphi-ct,\theta,t) = \alpha\sin\theta + Y_l(\varphi-ct, \theta),
\end{equation}
where $\alpha$ is given by 
\[
\alpha = \frac{2\omega - l(l+1)c}{2 - l(l+1)}.
\]
The travelling-wave solutions can be obtained from the stationary solutions.
The stability properties have been discussed in details by Constantin and Germain~\cite{pierre2022}.
In particular, the non-zonal Rossby-Haurwitz solutions are not Lyapunov stable.

\subsection{Stationary Rossby-Haurwitz solutions on a rotating ellipsoid}\label{propose_elli_Rossby}

Due to the inaccuracy of modeling a planet as a perfect sphere, it is natural to generalize Rossby-Haurwitz solutions from a sphere to an ellipsoid and expect the instability property to be inherited. 
By employing the spectral theory of Laplace-Beltrami operator on an ellipsoid (see Section~\ref{Laplace_elli}), we are able to discover non-zonal solutions of the stationary Euler equations
\begin{equation}\label{stationary2}
        [-\partial_\theta\psi\partial_\varphi + \partial_\varphi\psi\partial_\theta]\left(\Delta\psi+\frac{2\omega\sin\theta}{\sqrt{\sin^2\theta+b^2\cos^2\theta}}\right)=0.
\end{equation}

Unlike the case on a sphere, for any $\alpha \in \mathbb{R}$, $\psi=\alpha\sin\theta+Y_l^m(\varphi,\theta)$ cannot be a solution for equation~\eqref{stationary2}. 
Thus, a natural generalization is to find some function $g(\theta) \in C^{3, \alpha}\left((-\frac{\pi}{2}, \frac{\pi}{2})\right)$ such that
\begin{equation}\label{elli_RW}
    \psi=g(\theta)+Y_l^m(\varphi,\theta), \quad m \neq 0
\end{equation}
solves~\eqref{stationary2} with the boundary conditions~\eqref{boundary_condition_g} that is
\begin{equation}\label{bon_con_psi}
    \lim_{\theta\to\pm\frac{\pi}{2}} \partial_{\varphi}\psi(\varphi,\theta) = 0.
\end{equation}
Here, $Y_l^m$ belongs to $\mathbb{E}_{l,m}$ associated with eigenvalue $\lambda_{l,m}$.

We propose the stationary Rossby-Haurwitz solutions on a rotating ellipsoid to be $\psi$ in~\eqref{elli_RW} solving~\eqref{stationary2} with the boundary conditions~\eqref{bon_con_psi} satisfied.
The first natural question is about the existence. 
We will provide the proof in Section~\ref{Existence}.  

\subsection{Existence of the stationary Rossby-Haurwitz solutions on a rotating ellipsoid}\label{Existence}
In this section, we will prove there exists stationary non-zonal Rossby-Haurwitz solutions 
\begin{equation}\label{RHwave}
    \psi = g(\theta) + Y_l^m(\varphi,\theta)
\end{equation}
of the stationary Euler equations
\begin{equation}\label{Ew_stationary}
     [-\partial_\theta\psi\partial_\varphi + \partial_\varphi\psi\partial_\theta]\left(\Delta\psi+\frac{2\omega\sin\theta}{\sqrt{\sin^2\theta+b^2\cos^2\theta}}\right)=0.
\end{equation}
The boundary conditions for $\psi$ has been discussed in~\eqref{boundary_condition_g}, which are 
\begin{equation}\label{Boundary_condition_psi}
    \lim_{\theta\to\pm\frac{\pi}{2}} \partial_{\theta}\psi(\varphi,\theta) = 0.
\end{equation}
Furthermore, since equation~\eqref{Ew_stationary} takes derivatives for three times, the regularity condition
\begin{equation}\label{smoothness_condition_psi}
    \psi \in C^{3, \alpha}\left((-\pi,\pi)\times\left(-\frac{\pi}{2}, \frac{\pi}{2}\right)\right)
\end{equation}
should also be imposed.

Plugging~\ref{RHwave} into~\ref{Ew_stationary}, we end up with a third order ODE for $g(\theta)$
\begin{equation}\label{ODE_10}
    -\lambda_{l,m} g'(\theta) = \left(\Delta g(\theta)\right)' + \left(\frac{2\omega\sin\theta}{\sqrt{\sin^2\theta+b^2\cos^2\theta}}\right)'.
\end{equation}
By the formula of $\Delta$ in~\eqref{Delta_formula}, the ODE~\eqref{ODE_10} can be derived from the following ODE 
\begin{equation}
    \begin{split}\label{ODE1}
        -\lambda_{l,m} g(\theta) &s= \frac{1}{\cos\theta \sqrt{\sin^2\theta+b^2\cos ^2\theta}}
        \left(\frac{\cos\theta}{\sqrt{\sin^2\theta+b^2\cos^2\theta}} g'(\theta)\right)'\\ 
        &\quad + \frac{2\omega\sin\theta}{\sqrt{\sin^2\theta+b^2\cos^2\theta}}.
    \end{split}
\end{equation}
The boundary conditions for $g$ are
\begin{equation}\label{boud_cond}
    \lim_{\theta\to\pm\frac{\pi}{2}} g'(\theta) = 0,
\end{equation}
and the regularity condition for $g$ is 
\[
 g \in C^{3, \alpha}\left(\left(-\frac{\pi}{2}, \frac{\pi}{2}\right)\right).
\]

\begin{theorem}\label{thm_existence}
    There exists function $g \in C^{3, \alpha}\left(\left(-\frac{\pi}{2},\frac{\pi}{2}\right)\right)$ that solves the ODE~\eqref{ODE1} with the boundary conditions~\eqref{boud_cond}.
    As a result, the corresponding $\psi = g(\theta) + Y_l^m(\varphi,\theta)$ is a solution of the stationary Euler equations~\eqref{Ew_stationary}.
\end{theorem}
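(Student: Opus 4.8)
The plan is to recast the second-order equation~\eqref{ODE1} as a first-order system, exploit its odd symmetry to reduce to the half-interval $\left(-\frac{\pi}{2},0\right]$, and construct the regular solution there by a fixed-point argument for a Volterra integral equation. Writing $p(\theta)=\sin^2\theta+b^2\cos^2\theta$ and $\lambda=\lambda_{l,m}$, and introducing the flux variable $w=\frac{\cos\theta}{\sqrt{p}}\,g'$, equation~\eqref{ODE1} becomes
\[
g'(\theta)=\frac{\sqrt{p(\theta)}}{\cos\theta}\,w(\theta),\qquad w'(\theta)=-\lambda\cos\theta\sqrt{p(\theta)}\,g(\theta)-2\omega\sin\theta\cos\theta .
\]
The inhomogeneity on the right of~\eqref{ODE1} is odd in $\theta$ and the operator on the left has even coefficients, so it maps odd functions to odd functions; hence if $g$ solves~\eqref{ODE1} on $\left(-\frac{\pi}{2},0\right]$ with $g(0)=0$, then $\theta\mapsto -g(-\theta)$ continues it to an odd solution on the whole interval, and the right-hand Neumann condition in~\eqref{boud_cond} is inherited from the left one by symmetry. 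I would therefore concentrate on producing a solution on $\left(-\frac{\pi}{2},0\right]$ that is regular at the pole (hence satisfies $\lim_{\theta\to-\pi/2}g'=0$) and vanishes at the origin.

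Both endpoints $\theta=\pm\frac{\pi}{2}$ are regular singular points: near $\theta=-\frac{\pi}{2}$, with $s=\theta+\frac{\pi}{2}$, one has $\cos\theta\sim s$ and $p\to1$, so the equation takes the two-dimensional radial normal form $g''+\frac{1}{s}g'+O(1)=0$, whose indicial equation has the double root $0$. The regular branch is therefore even and smooth in $s$, while its companion carries a $\log s$ and is excluded precisely by the Neumann condition. To capture the regular branch I would anchor the integration at $\theta=-\frac{\pi}{2}$ by imposing $w\!\left(-\frac{\pi}{2}\right)=0$, which yields the Volterra equation
\[
g(\theta)=g_0-\int_{-\pi/2}^{\theta}\frac{\sqrt{p(\tau)}}{\cos\tau}\left(\int_{-\pi/2}^{\tau}\Bigl[\lambda\cos\sigma\sqrt{p(\sigma)}\,g(\sigma)+2\omega\sin\sigma\cos\sigma\Bigr]d\sigma\right)d\tau ,
\]
where $g_0=g\!\left(-\frac{\pi}{2}\right)$ is a free parameter. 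The inner integral vanishes to second order at $-\frac{\pi}{2}$ (its integrand is $O(s)$ there), which cancels the $\frac{1}{\cos\tau}\sim\frac{1}{s}$ singularity of the outer kernel; the composite kernel is thus continuous and defines a genuine Volterra operator on $C\!\left(\left[-\frac{\pi}{2},0\right]\right)$. A standard contraction/Picard argument then gives, for each $g_0$, a unique continuous solution, which by analyticity of the coefficients away from the poles is $C^\infty$ on $\left(-\frac{\pi}{2},0\right)$ and, by the Frobenius structure above, extends smoothly up to $-\frac{\pi}{2}$ with $g'\!\left(-\frac{\pi}{2}\right)=0$.

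By linearity the solution is affine in the parameter, $g=g_0\,g_{\mathrm{h}}+g_{\mathrm{p}}$ with $g_{\mathrm{h}}\!\left(-\frac{\pi}{2}\right)=1$, $g_{\mathrm{p}}\!\left(-\frac{\pi}{2}\right)=0$, and the variation-of-constants representation lets me choose $g_0=-g_{\mathrm{p}}(0)/g_{\mathrm{h}}(0)$ so that $g(0)=0$. This step needs $g_{\mathrm{h}}(0)\neq0$: otherwise the odd extension of $g_{\mathrm{h}}$ would be a smooth zonal Neumann eigenfunction of $-\Delta$ with eigenvalue $\lambda_{l,m}$, whereas by Lemma~\ref{eig_Laplace} the splitting $\Lambda_1\propto(2l^2-2m^2+2l-1)$ keeps $\lambda_{l,m}$ (for $m\neq0$) off the zonal spectrum when $\varepsilon\neq0$ is small. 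After odd extension, the only remaining point is regularity at the join $\theta=0$, and here I would use the structural property of~\eqref{ODE1}: evaluating it at $\theta=0$, where $g(0)=0$ and $\sin\theta=0$, forces $\left(\frac{\cos\theta}{\sqrt{p}}g'\right)'(0)=0$, and since $\frac{\cos\theta}{\sqrt{p}}$ is even with vanishing derivative at $0$ this gives $g''(0)=0$. Together with $g(0)=0$ this makes the even-order derivatives vanish, so the odd extension is $C^3$ across the origin (the odd-order derivatives matching automatically), and Hölder continuity of $g'''$ on each side upgrades this to $C^{3,\alpha}$. Substituting $\psi=g(\theta)+Y_l^m(\varphi,\theta)$ and using $-\Delta Y_l^m=\lambda_{l,m}Y_l^m$ then shows that $\Delta\psi+\frac{2\omega\sin\theta}{\sqrt{p}}=-\lambda_{l,m}\psi+\text{const}$, whose Poisson bracket with $\psi$ vanishes, verifying~\eqref{Ew_stationary}.

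The main obstacle is the pair of regular singular points: the factor $\frac{1}{\cos\theta}$ defeats the classical Picard--Lindel\"of theorem at the endpoints, so the crux is to show that the composite Volterra kernel is nonetheless continuous (through the quadratic vanishing of the inner integral) and that the branch selected by $w\!\left(-\frac{\pi}{2}\right)=0$ is the one meeting the Neumann condition. A secondary difficulty is enforcing both boundary conditions simultaneously, which is resolved here by the odd-symmetry reduction together with the non-resonance of $\lambda_{l,m}$ with the zonal spectrum; one should note that at $b=1$ this non-resonance degenerates, consistent with the spherical case where $g$ collapses to $\alpha\sin\theta$.
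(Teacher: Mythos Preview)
Your approach is essentially that of the paper: both reduce to the half-interval $\left(-\frac{\pi}{2},0\right]$ by odd symmetry, recast~\eqref{ODE1} as a Volterra integral equation anchored at the singular endpoint (your double integral, after Fubini, yields exactly the paper's kernel $K(y,\theta)=-\lambda\bigl[F(y)-F(\theta)\bigr]\cos\theta\sqrt{p(\theta)}$ with $F'=\sqrt{p}/\cos$), use the affine dependence on the free constant to shoot for $g(0)=0$, and then secure $C^{3}$ regularity at the join via $g''(0)=0$.

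The one substantive difference is how you establish that the shooting denominator $g_{\mathrm h}(0)$ (equivalently, the paper's $S\!\left(0,-\tfrac{\pi}{2}\right)$) is nonzero. The paper invokes an abstract Volterra uniqueness lemma of Diethelm--Ford (Lemma~\ref{Lemma2}): if $S\!\left(0,-\tfrac{\pi}{2}\right)=0$, then the two initial values $u_0=0$ and $u_0=1$ would both produce $u(0)=0$, contradicting that lemma. You instead argue spectrally: if $g_{\mathrm h}(0)=0$, its odd extension would be a nonzero \emph{zonal} eigenfunction of $-\Delta$ with eigenvalue $\lambda_{l,m}$, which for $m\neq0$ and small $\varepsilon$ is excluded by the eigenvalue splitting in Lemma~\ref{eig_Laplace}. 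Your route is more conceptual and in fact anticipates the mechanism behind the paper's uniqueness result (Theorem~\ref{thm_uniquess}), at the cost of explicitly relying on the perturbative regime; the paper's route through Lemma~\ref{Lemma2} does not overtly use the spectral splitting. Since the whole framework (the eigenspaces $\mathbb{E}_{l,m}$ and the form of $Y_l^m$) already presupposes the near-sphere regime of Lemma~\ref{eig_Laplace}, this is not a genuine restriction, and both arguments are valid here.
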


\begin{proof}  

    With denotation $\rho(\theta) =\sqrt{\sin^2\theta+b^2\cos^2\theta}$, the ODE~\eqref{ODE1} can be expanded as
    \begin{equation}\label{Expansion}
        -\lambda_{l,m} g(\theta) = \left( -\frac{\tan\theta}{\rho^2(\theta)}+\frac{(1-b^2)\sin\theta\cos\theta}{\rho^4(\theta)} \right) g'(\theta) + \frac{1}{\rho^2(\theta)}g''(\theta) + \frac{2\omega\sin\theta}{\rho(\theta)}.
    \end{equation}
    Notice that in~\eqref{Expansion}, $-\lambda_{l,m}$ and $1/\rho^2(\theta)$ are even functions, while
    \[
    -\frac{\tan\theta}{\rho^2(\theta)}+\frac{(1-b^2)\sin\theta\cos\theta}{\rho^4(\theta)} \quad \text{and} \quad \frac{2\omega\sin\theta}{\rho(\theta)}
    \]
    are odd functions. 
    Thus, if we can find a solution $g_{left}(\theta)$ on $\left(-\frac{\pi}{2}, 0\right]$ with $g_{left}'\left(-\frac{\pi}{2}\right) = 0$ and $g_{left}(0) = 0$, the odd extension
    \[
    g(\theta) = \begin{cases}
                g_{left}(\theta) & \text{on $\theta \in \left(-\frac{\pi}{2}, 0\right]$}\\
                - g_{left}(-\theta) & \text{on $\theta \in \left(0, \frac{\pi}{2}\right)$,}
                \end{cases}
    \]
    solves the ODE~\eqref{ODE1}, with the boundary conditions~\eqref{boud_cond} satisfied. 
    
    The differentiability of the constructed $g(\theta)$ at $0$ is at least of three orders. 
    This is because the first and third derivatives of the odd function $g$ match automatically. 
    Moreover, $g''(0)=0$ can be implied from the expansion~\eqref{Expansion}, together with $g(0) = 0$.
    Therefore, the proof of Theorem~\ref{thm_existence} can be reduced to Theorem~\ref{reduce}.
\end{proof}

\begin{theorem}\label{reduce}
    There exists $C \in \mathbb{R}$, and a solution $g$ of the ODE~\eqref{ODE1} on $\left(-\frac{\pi}{2},0\right]$, such that 
    \[g \in C^{3,\alpha}\left(\left(-\frac{\pi}{2}, 0\right]\right), \quad g'\left(-\frac{\pi}{2}\right) = 0 \quad \text{and} \quad g(0) = 0.\]
\end{theorem}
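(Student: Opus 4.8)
The plan is to rewrite~\eqref{ODE1} in Sturm--Liouville form and treat $\theta=-\tfrac{\pi}{2}$ as a regular singular point. Setting $p(\theta)=\cos\theta/\rho(\theta)$, $w(\theta)=\cos\theta\,\rho(\theta)$ and $h(\theta)=-2\omega\sin\theta\cos\theta$, the equation reads
\[
\bigl(p(\theta)\,g'(\theta)\bigr)' = -\lambda_{l,m}\,w(\theta)\,g(\theta)+h(\theta).
\]
Because $p(-\tfrac{\pi}{2})=w(-\tfrac{\pi}{2})=0$, the principal part $(pg')'=0$ has the fundamental pair $\{1,\int p(\sigma)^{-1}d\sigma\}$, whose second member behaves like $\log(\theta+\tfrac{\pi}{2})$, so the indicial roots at the singular endpoint are both $0$. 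The solution we seek is the bounded branch, and I would first record the key structural fact that any bounded solution satisfies $p(\tau)g'(\tau)\to0$ as $\tau\to-\tfrac{\pi}{2}$, so the left Neumann condition $g'(-\tfrac{\pi}{2})=0$ is forced automatically by regularity rather than imposed by hand.

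Using this, I would apply the variation-of-constants formula by integrating the identity twice from the singular endpoint. One integration gives
\[
g'(\theta)=\frac{1}{p(\theta)}\int_{-\pi/2}^{\theta}\bigl[h(\tau)-\lambda_{l,m}\,w(\tau)\,g(\tau)\bigr]\,d\tau,
\]
and a second integration, anchored at $g(-\tfrac{\pi}{2})=C$, yields the Volterra integral equation
\[
g(\theta)=C+\int_{-\pi/2}^{\theta}\frac{1}{p(\sigma)}\int_{-\pi/2}^{\sigma}\bigl[h(\tau)-\lambda_{l,m}\,w(\tau)\,g(\tau)\bigr]\,d\tau\,d\sigma.
\]
Interchanging the order of integration recasts the $g$-dependent part as $\int_{-\pi/2}^{\theta}K(\theta,\tau)g(\tau)\,d\tau$ with the lower-triangular kernel $K(\theta,\tau)=-\lambda_{l,m}\,w(\tau)\int_{\tau}^{\theta}p(\sigma)^{-1}\,d\sigma$, so the equation is genuinely of Volterra type.

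The crux is to show this kernel stays bounded up to the singular endpoint. As $\tau\to-\tfrac{\pi}{2}$ the factor $\int_{\tau}^{\theta}p(\sigma)^{-1}d\sigma$ diverges only logarithmically while $w(\tau)\sim(\tau+\tfrac{\pi}{2})$ vanishes linearly, so the product $w(\tau)\int_{\tau}^{\theta}p^{-1}$ remains bounded (in fact tends to $0$); the same cancellation keeps the inhomogeneous term finite and continuous. With a bounded, continuous kernel on the compact triangle $\{-\tfrac{\pi}{2}\le\tau\le\theta\le0\}$, the Volterra operator is quasi-nilpotent, so the successive-approximation (Neumann) series converges and produces, for every $C\in\mathbb{R}$, a unique continuous solution $g_C$. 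Feeding $g_C$ back into the integral representation bootstraps its regularity to $C^{3,\alpha}\bigl((-\tfrac{\pi}{2},0]\bigr)$, and the first-order relation above shows $g_C'(\theta)\to0$ at the left endpoint.

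It then remains to select $C$ so that $g_C(0)=0$. By linearity $g_C=g^{(0)}+C\,g^{(h)}$, where $g^{(0)}$ solves the equation with $C=0$ and $g^{(h)}$ is the bounded homogeneous solution normalized by $g^{(h)}(-\tfrac{\pi}{2})=1$; since $g_C(0)=g^{(0)}(0)+C\,g^{(h)}(0)$, I would take $C=-g^{(0)}(0)/g^{(h)}(0)$. I expect the genuine obstacle to be twofold: first, the regular singular point itself, i.e. isolating the bounded branch and proving the $1/p$ singularity is tamed so the Volterra scheme closes; and second, the degenerate possibility $g^{(h)}(0)=0$, which means $\lambda_{l,m}$ is a Neumann--Dirichlet eigenvalue on $(-\tfrac{\pi}{2},0)$ and no choice of $C$ works directly. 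In that resonant case one must instead verify the Fredholm solvability condition $g^{(0)}(0)=0$; this reduces to an orthogonality between the forcing $h$ and the resonant eigenfunction, which on the sphere is exactly $\langle P_1,P_l\rangle_{L^2(-1,1)}=0$ and, for $b$ close to $1$, should persist by perturbing this spherical orthogonality through the spectral expansion of Lemma~\ref{eig_Laplace}.
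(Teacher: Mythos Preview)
Your proposal tracks the paper's proof almost line for line through the main construction. You recast~\eqref{ODE1} as a Volterra integral equation anchored at the singular endpoint $-\tfrac{\pi}{2}$, identify the kernel $K(y,\tau)=-\lambda_{l,m}\,w(\tau)\int_{\tau}^{y}p(\sigma)^{-1}\,d\sigma$, and make the key observation that the logarithmic blow-up of $\int p^{-1}$ is cancelled by the linear vanishing of $w(\tau)$, so that $K$ extends continuously to the closed triangle. The paper does exactly this (writing $F(y)-F(\tau)$ for your $\int_\tau^y p^{-1}$), and both of you then invoke standard Volterra theory---you via the Neumann series, the paper via the variation-of-constants formula of Lemma~\ref{lemma_voc}---to produce a unique continuous $g_C$ for every $C$, after which $g_C'(-\tfrac{\pi}{2})=0$ is checked in the same way.

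The one substantive divergence is in forcing $g_C(0)=0$. You write $g_C(0)=g^{(0)}(0)+C\,g^{(h)}(0)$ and then treat the possibility $g^{(h)}(0)=0$ as a resonance, to be resolved by a Fredholm solvability condition and a perturbation-from-the-sphere argument via Lemma~\ref{eig_Laplace}. The paper takes a different route and rules the resonance out entirely: in its notation $g^{(h)}(y)=S(y,-\tfrac{\pi}{2})$, and if $S(0,-\tfrac{\pi}{2})=0$ then both $u_0=0$ (with $u\equiv 0$) and $u_0=1$ (with $u=S(\cdot,-\tfrac{\pi}{2})$) yield solutions of the homogeneous VIE with terminal value $0$, contradicting the Diethelm--Ford uniqueness statement recorded as Lemma~\ref{Lemma2}. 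So where you anticipate a case split whose second branch is left as a sketch---and which, as written, would at best cover $b$ close to $1$---the paper closes the argument with a single appeal to an external VIE lemma and never enters the resonant case at all.
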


\begin{proof}

    Writing the equation~\eqref{ODE1} into an integral equation, we have
    \begin{equation}\label{integral_equation}
        \begin{split}
            g(y)-C &= -\lambda_{l,m} \int_{-\frac{\pi}{2}}^{y} [ F(y)-F(\theta) ] \cos \theta\sqrt{\sin^2\theta+b^2\cos^2\theta} g(\theta)d\theta \\
            &\quad + \omega \int_{-\frac{\pi}{2}}^{y}\cos\theta \sqrt{\sin^2\theta+b^2\cos^2\theta} d\theta 
        \end{split}
    \end{equation}
    for any constant $C \in \mathbb{R}$, where $F$ satisfies
    \begin{equation}
         F(0) = 0, \quad F'(x) = \frac{\sqrt{\sin^2x+b^2\cos^2x}}{\cos x} \quad \text{on} \quad x\in \left(-\frac{\pi}{2},0\right]. 
    \end{equation}
    Define the kernel $K$ to be
    \begin{equation}
        K(y, \theta) := 
            \begin{cases}
              -\lambda_{l,m}[F(y)-F(\theta)]\cos\theta\sqrt{\sin ^2\theta+b^2\cos^2\theta} & \text{on $-\frac{\pi}{2}<\theta \leq y \leq 0$}\\
              0 & \text{on $-\frac{\pi}{2} = \theta \leq y \leq 0$};
            \end{cases}
    \end{equation}
    the function $r$ to be
    \begin{equation}
        r(y) := \omega \int_{-\frac{\pi}{2}}^{y}\cos\theta \sqrt{\sin^2\theta+b^2\cos^2\theta} d\theta \quad \text{for} \quad y\in \left[-\frac{\pi}{2},0\right];
    \end{equation}
    and the domain $D$ to be
    \[ D:= \left\{ (y, \theta): -\frac{\pi}{2} \leq \theta \leq y \leq 0 \}\right\}. \]
    The equation~\eqref{integral_equation} becomes
    \begin{equation}\label{int2}
        g(y) = \int_{-\frac{\pi}{2}}^{y}K(y,\theta)g(\theta)d\theta + r(y) + C.
    \end{equation}
    For any constant $C$:
    \begin{itemize}
        \item The integral equation~\eqref{int2} has a unique and continuous solution since $K(y,\theta)$ is continuous on $D$, by Lemma~\ref{lemma_voc}.
        \item Check the boundary condition at $-\frac{\pi}{2}$:
        \begin{multline} 
        g'(y) = -\lambda_{l,m} \int_{-\frac{\pi}{2}}^{y} \frac{\sqrt{\sin^2 y+b^2\cos^2 y}}{\cos y}\cos\theta\sqrt{\sin^2\theta+b^2\cos^2\theta} g(\theta)d\theta \\
        + {\omega}{\cos y} \sqrt{\sin ^2y+b^2\cos^2y}.
        \end{multline}
        As $y \to -\frac{\pi}{2}$, $g'(y) \to 0$ because $g$ is continuous and bounded on the closed domain $D$.
    \end{itemize}
    
    Now, we are going to prove $\exists$ $C \in \mathbb{R}$, such that the solution $g$ of~\eqref{int2} satisfies $g(0) = 0$.
    By Lemma~\ref{lemma_voc}, it can be implied that
    \begin{equation}
        g(0) = S\left(0, -\frac{\pi}{2}\right)C + \int_{-\frac{\pi}{2}}^{0} S(0,s)r'(s) ds,
    \end{equation}
    where $S\left(0,s\right)$ is the (unique) continuous solution of
    \[
    S(0,s) = 1 + \int_s^0 K(0,v)S(v,s)dv, \quad (0,s) \in D,
    \]
    and 
    \[
    r'(s) = \omega\cos s \sqrt{\sin^2s+b^2\cos^2s}.
    \]
    Since $S$ and $r'$ are determined, choosing 
    \begin{equation}
        C = -\frac{\int_{-\frac{\pi}{2}}^{0} S(0,s)r'(s)ds}{S\left(0, -\frac{\pi}{2}\right)}
    \end{equation}
    makes $g(0) = 0$. 
    The last thing to prove is $S\left(0, -\frac{\pi}{2}\right) \neq 0$.\\
 
    \noindent Proof by contradiction: Suppose $S\left(0, -\frac{\pi}{2}\right) = 0$, let $u(y) = S\left(y, -\frac{\pi}{2}\right)$, then by Lemma~\ref{lemma_voc},  $u$ satisfies
    \begin{equation}
        u(y) = 1 + \int_{-\frac{\pi}{2}}^{y} K(y,v)u(v)dv, \quad u(0) = 0.
    \end{equation}
    However, together with the following Volterra integral equation
    \begin{equation}\label{VIE_lemma2}
        \Tilde{u}(y) = 0 + \int_{-\frac{\pi}{2}}^{y} K(y,v)\Tilde{u}(v)dv
    \end{equation}
    and the solution $\Tilde{u} \equiv 0$, the assumption $u(0) = 0$ contradicts with Lemma~\ref{Lemma2}, by choosing $c$ in Lemma~\ref{Lemma2} to be 0.
\end{proof}

\subsection{Uniqueness of the stationary Rossby-Haurwitz solutions on a rotating ellipsoid}\label{Uniqueness}
We have proved the existence of the stationary Rossby-Haurwitz solutions $\psi=g(\theta)+Y_l^m(\varphi,\theta)$, for non-zonal $Y_l^m \in \mathbb{E}_{l,m}$ ($m\ne0$). 
In this section, we will prove the solution $\psi$, or equivalently $g$, is uniquely determined by $Y_l^m$ up to a constant. Consequently, the corresponding velocity $U$ is unique for a given $Y_l^m$.

\begin{theorem}\label{thm_uniquess}
    Given a non-zonal function $Y_l^m(\varphi,\theta) \in \mathbb{E}_{l,m}$, let $\psi(\varphi,\theta) = g(\theta) + Y_l^m(\varphi,\theta)$ for some function $g(\theta) \in C^{3,\alpha}\left((-\frac{\pi}{2}, \frac{\pi}{2})\right)$.
    If $\psi(\varphi,\theta)$ solves the stationary Euler equations
    \begin{equation}\label{Ew_stationary3}
        [-\partial_\theta\psi\partial_\varphi + \partial_\varphi\psi\partial_\theta]\left(\Delta\psi+\frac{2\omega\sin\theta}{\sqrt{\sin^2\theta+b^2\cos^2\theta}}\right)=0,
    \end{equation}
    with the boundary conditions
    \[
    \lim_{\theta\to\pm\frac{\pi}{2}} g'(\theta) = 0,
    \]
    $g(\theta)$ is unique up to a constant.
\end{theorem}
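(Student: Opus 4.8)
The plan is to argue by contradiction via a difference argument, following the sketch given in the introduction. Suppose $\psi_i = g_i(\theta) + Y_l^m(\varphi,\theta)$, $i=1,2$, both solve the stationary equation~\eqref{Ew_stationary3} with the same non-zonal $Y_l^m \in \mathbb{E}_{l,m}$ ($m\neq 0$) and both satisfy $\lim_{\theta\to\pm\pi/2}g_i'(\theta)=0$. As in the derivation of~\eqref{ODE_10}, inserting such a $\psi_i$ into the stationary equation produces a bracket of the form $\partial_\varphi Y_l^m\,(h_i' + \lambda_{l,m}g_i')$ with $h_i = \Delta g_i + 2\omega\sin\theta/\rho$; since $m\neq0$ forces $\partial_\varphi Y_l^m\not\equiv 0$, each $g_i$ must solve the third-order ODE
\[
-\lambda_{l,m}g_i'(\theta) = \left(\Delta g_i(\theta)\right)' + \left(\frac{2\omega\sin\theta}{\sqrt{\sin^2\theta+b^2\cos^2\theta}}\right)'.
\]
Subtracting the two instances cancels the inhomogeneous $\omega$-term, so the difference $\tilde g := g_1 - g_2 \in C^{3,\alpha}$ solves the homogeneous equation $(\Delta\tilde g)' + \lambda_{l,m}\tilde g' = 0$.

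Next I would integrate once in $\theta$ to lower the order, obtaining $\Delta\tilde g + \lambda_{l,m}\tilde g = c_0$ for some constant $c_0$. Absorbing this constant by setting $\hat g := \tilde g + c_0/\lambda_{l,m}$ (legitimate since $\lambda_{l,m}>0$) gives
\[
-\Delta\hat g = \lambda_{l,m}\hat g.
\]
Thus $\hat g$ solves the eigenvalue equation for $-\Delta$ at eigenvalue $\lambda_{l,m}$, and crucially $\hat g$ depends on $\theta$ alone, i.e. it is \emph{zonal}. To promote $\hat g$ from a formal ODE solution to a genuine element of the eigenspace $\mathbb{E}_{l,m}$, I would verify the regularity and pole conditions: $\hat g\in C^{3,\alpha}$, and $\hat g'(\theta)=\tilde g'(\theta)=g_1'(\theta)-g_2'(\theta)\to 0$ as $\theta\to\pm\pi/2$, so $\hat g$ extends smoothly across the poles and lies in $L^2(\mathbb{S}^2,d\sigma)$. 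Hence $\hat g$ is an honest eigenfunction of $-\Delta$ with eigenvalue $\lambda_{l,m}$, so $\hat g\in\mathbb{E}_{l,m}$.

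The contradiction is then purely spectral, and this is the step I expect to be the crux. By Lemma~\ref{eig_Laplace} and the description of the eigenspaces that follows it, for $m\neq 0$ the space $\mathbb{E}_{l,m}$ is two-dimensional with basis $\{y_1(\theta)e^{im\varphi},\, y_2(\theta)e^{-im\varphi}\}$. Every nonzero element $a\,y_1(\theta)e^{im\varphi} + b\,y_2(\theta)e^{-im\varphi}$ depends genuinely on $\varphi$, because $e^{im\varphi}$ and $e^{-im\varphi}$ are linearly independent in $\varphi$ and $y_1,y_2\not\equiv 0$. A zonal function therefore cannot have this form unless $a=b=0$, so the only zonal member of $\mathbb{E}_{l,m}$ is the zero function. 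Consequently $\hat g\equiv 0$, which means $\tilde g = g_1-g_2$ is constant: this is precisely uniqueness up to an additive constant, and since adding a constant to $\psi$ leaves $\operatorname{grad}\psi$ unchanged, the velocity $U=J\operatorname{grad}\psi$ is uniquely determined.

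The main obstacle worth flagging is the hidden assumption in the last paragraph that $\lambda_{l,m}$ is genuinely a multiplicity-two eigenvalue, so that no extra zonal eigenfunction (from some $\mathbb{E}_{l',0}$) secretly shares this eigenvalue; were $\lambda_{l,m}=\lambda_{l',0}$, a nonzero zonal eigenfunction could exist and the argument would fail. This is exactly what the multiplicity clause of Lemma~\ref{eig_Laplace} rules out for $\varepsilon<\varepsilon_0$, so the reduction to the spectral obstruction is clean and the conclusion follows.
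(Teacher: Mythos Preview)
Your proposal is correct and follows essentially the same route as the paper's proof: form the difference $\tilde g=g_1-g_2$, integrate the homogeneous third-order ODE to $\Delta\tilde g+\lambda_{l,m}\tilde g=c_0$, shift by the constant, and conclude that a zonal eigenfunction in $\mathbb{E}_{l,m}$ with $m\neq 0$ must vanish. Your write-up is in fact more careful than the paper's---you explicitly justify why $g_i$ must satisfy the ODE via $\partial_\varphi Y_l^m\not\equiv 0$, you check the pole/regularity conditions needed to regard $\hat g$ as a genuine $L^2$ eigenfunction, and you correctly flag the eigenvalue-collision caveat $\lambda_{l,m}=\lambda_{l',0}$ (which the paper silently assumes away via the multiplicity clause of Lemma~\ref{eig_Laplace}).
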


\begin{proof}

    Suppose $\psi_1 = g_1(\theta) + Y_l^m(\varphi,\theta)$ and $\psi_2 = g_2(\theta) + Y_l^m(\varphi,\theta)$ solves the equation~\eqref{Ew_stationary3}. Then, $\Tilde{g}(\theta) = g_1(\theta) - g_2(\theta)$ solves 
    \[
    -\lambda_{l,m} \Tilde{g}'(\theta) = (\Delta \Tilde{g}(\theta))',
    \]
    which is equivalent to 
    \begin{equation}\label{g_Tilde}
        -\lambda_{l,m} \Tilde{g}(\theta) = (\Delta \Tilde{g}(\theta))+C, \quad \forall C \in \mathbb{R}.
    \end{equation}
    The equation~\eqref{g_Tilde} has a trivial solution $\Tilde{g}(\theta) = -C/\lambda_{l,m}$. 
    
    We now prove this solution is unique.
    If there exists another $\Tilde{g}^*(\theta)$ solves the equation~\eqref{g_Tilde}, the difference $q(\theta) = \Tilde{g}(\theta) - \Tilde{g}^*(\theta)$ must satisfy
    \begin{equation}\label{q}
        -\Delta q(\theta) = \lambda_{l,m}q(\theta),
    \end{equation}
    which implies $q(\theta) \in \mathbb{E}_{l,m}$.
    However, since $\mathbb{E}_{l,m}$ has basis 
    \[
    \left\{y_1(\theta)e^{im\varphi},y_2(\theta)e^{-im\varphi}\right\}, \quad m\ne 0
    \]
    (see Section~\ref{Laplace_elli}), $q(\theta) \notin \mathbb{E}_{l,m}$ except for $q(\theta)\equiv 0$ because non-trivial functions in $\mathbb{E}_{l,m}$ must depend on $\varphi$. 
    This proves the solution $\Tilde{g}(\theta)$ of~\eqref{g_Tilde} is unique.
    Furthermore, it is implied that $g(\theta)$ is unique up to a constant.
\end{proof}

\subsection{Travelling-wave Rossby-Haurwitz solutions on a rotating ellipsoid}\label{sec_tra}

Similar to classical travelling-wave Rossby-Haurwitz solutions on a sphere, the travelling-wave solutions on an ellipsoid can also be obtained from the stationary solutions.
The construction is stated in Theorem~\ref{thm_travelling}.

\begin{theorem}\label{thm_travelling}
    Let $c \in \mathbb{R}$ and $\psi = g(\theta) + Y_l^m(\varphi,\theta)$ be a solution of the stationary Euler equations~\eqref{Ew_stationary1}. A traveling-wave solution $\psi_c$ with travelling speed $c$ is constructed as
    \begin{equation}\label{travelling_c}
        \psi_c(\varphi, \theta ,t) = g(\theta) + c \lambda_{l,m} f(\theta) + Y_l^m(\varphi-ct, \theta),
    \end{equation}
    where $f(\theta)$ is the solution of the following ODE
    \begin{equation}\label{Integral_f}
        -\lambda_{l,m} f(\theta) = \frac{1}{\cos\theta \sqrt{\sin^2\theta+b^2\cos ^2\theta}} \left(\frac{\cos\theta}{\sqrt{\sin^2\theta+b^2\cos^2\theta}} f'(\theta)\right)' + P(\theta),
    \end{equation}
    in which 
    \[
        P(\theta) = \int_{-\frac{\pi}{2}}^\theta\cos(s)\sqrt{\sin^2(s)+b^2\cos^2(s)}ds - \int_{-\frac{\pi}{2}}^0\cos(s)\sqrt{\sin^2(s)+b^2\cos^2(s)}ds.
    \]
\end{theorem}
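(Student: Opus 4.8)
The plan is to verify the claim by direct substitution, exploiting two structural features: the $\varphi$-translation invariance of the stationary equation and the traveling-wave relation $\partial_t=-c\,\partial_\varphi$. Throughout write $\rho(\theta)=\sqrt{\sin^2\theta+b^2\cos^2\theta}$ and set $\Psi(\varphi,\theta,t):=g(\theta)+Y_l^m(\varphi-ct,\theta)$, so that $\psi_c=\Psi+c\lambda_{l,m}f(\theta)$. Since the coefficients of the stationary equation~\eqref{Ew_stationary1} depend only on $\theta$, the $\varphi$-translate $\Psi$ of the given stationary solution $g+Y_l^m$ again solves~\eqref{Ew_stationary1} at each fixed $t$; moreover $\partial_t\Psi=-c\,\partial_\varphi\Psi$. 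I would then introduce the advected quantity $Q:=\Delta\psi_c+\frac{2\omega\sin\theta}{\rho(\theta)}$ and, writing $Q_\Psi:=\Delta\Psi+\frac{2\omega\sin\theta}{\rho(\theta)}$, use $\Delta Y_l^m=-\lambda_{l,m}Y_l^m$ to record $Q=Q_\Psi+c\lambda_{l,m}\Delta f$.

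First I would treat the time term. Because $f$ is independent of $t$ and $\varphi$, it reduces to $\partial_t Q=-c\,\partial_\varphi Q=-c\,\partial_\varphi Q_\Psi=c\lambda_{l,m}\,\partial_\varphi\Psi$, where the last step uses $\partial_\varphi Q_\Psi=\partial_\varphi\Delta\Psi=-\lambda_{l,m}\partial_\varphi\Psi$. Next I would expand the advection term $\frac{1}{\cos\theta\,\rho(\theta)}[-\partial_\theta\psi_c\,\partial_\varphi+\partial_\varphi\psi_c\,\partial_\theta]Q$. Splitting $\partial_\theta\psi_c=\partial_\theta\Psi+c\lambda_{l,m}f'$ and $\partial_\varphi\psi_c=\partial_\varphi\Psi$, the leading bracket collapses to the stationary expression $[-\partial_\theta\Psi\,\partial_\varphi+\partial_\varphi\Psi\,\partial_\theta]Q_\Psi$, which vanishes because $\Psi$ solves~\eqref{Ew_stationary1}. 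The surviving cross terms combine, again via $\partial_\varphi Q_\Psi=-\lambda_{l,m}\partial_\varphi\Psi$, into $c\lambda_{l,m}\,\partial_\varphi\Psi\,(\lambda_{l,m}f'+\partial_\theta\Delta f)$. Adding the time term, the full left-hand side of~\eqref{Ew} becomes
\[
c\lambda_{l,m}\,\partial_\varphi\Psi\left[1+\frac{\lambda_{l,m}f'(\theta)+\partial_\theta\Delta f(\theta)}{\cos\theta\,\rho(\theta)}\right].
\]
Since $m\neq 0$ forces $\partial_\varphi\Psi=\partial_\varphi Y_l^m\not\equiv 0$, the Euler equations hold precisely when the bracket vanishes, i.e. when $\partial_\theta\Delta f+\lambda_{l,m}f'=-\cos\theta\,\rho(\theta)$.

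It then remains to recognize this as the differentiated form of the stated ODE. For $f=f(\theta)$ one has $\Delta f=\frac{1}{\cos\theta\,\rho}\bigl(\frac{\cos\theta}{\rho}f'\bigr)'$ (the identity underlying~\eqref{Delta_formula}), so~\eqref{Integral_f} reads $\Delta f+\lambda_{l,m}f=-P$; differentiating in $\theta$ and using $P'(\theta)=\cos\theta\,\rho(\theta)$ yields exactly $\partial_\theta\Delta f+\lambda_{l,m}f'=-\cos\theta\,\rho(\theta)$, which closes the verification. For the existence of $f$ I would observe that~\eqref{Integral_f} has the same singular structure as~\eqref{ODE1} but with inhomogeneity $P$, and that $P$ is \emph{odd}, since $P(\theta)=\int_0^\theta\cos s\,\rho(s)\,ds$. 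Hence the Volterra-integral-equation plus odd-extension argument of Theorem~\ref{reduce} applies verbatim, producing $f\in C^{3,\alpha}$ satisfying the Neumann conditions at $\pm\frac{\pi}{2}$ needed for continuity of the velocity at the poles.

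The main obstacle is the bookkeeping in the advection term: one must see that the stationary bracket annihilates the bulk contribution and that the two leftover cross terms are \emph{both} proportional to $\partial_\varphi\Psi$, so that the entire residual can be cancelled by a single scalar ODE in $f(\theta)$. The identity $\partial_\varphi Q_\Psi=-\lambda_{l,m}\partial_\varphi\Psi$, which is exactly the statement that $Y_l^m$ lies in the $(l,m)$-th eigenspace, is what makes this collapse occur; once it is in hand, the remaining steps are the routine analysis already carried out in Theorems~\ref{thm_existence} and~\ref{reduce}.
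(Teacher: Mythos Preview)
Your argument is correct and follows essentially the same route as the paper: both substitute the ansatz into~\eqref{Ew}, use $\Delta Y_l^m=-\lambda_{l,m}Y_l^m$ to see that the residual is $\partial_\varphi Y_l^m$ times a purely $\theta$-dependent factor, and reduce to the scalar ODE $\lambda_{l,m}f'+(\Delta f)'=-\cos\theta\,\rho(\theta)$, whose integrated form is~\eqref{Integral_f} and whose existence follows from the odd-extension argument of Theorem~\ref{reduce}. Your packaging via $\Psi=g+Y_l^m(\varphi-ct,\theta)$ and the observation that $\Psi$ solves~\eqref{Ew_stationary1} at each fixed $t$ is a slightly cleaner bookkeeping than the paper's direct computation (which first simplifies $\Delta\psi_c$ using $-\lambda_{l,m}g=\Delta g+\tfrac{2\omega\sin\theta}{\rho}$ and leaves $\beta$ undetermined until the end), but the substance is the same.
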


\begin{proof}
    We aim to find some $\beta \in \mathbb{R}$ and $f(\theta) \in C^{3,\alpha}\left( (-\frac{\pi}{2}, \frac{\pi}{2})\right)$, such that
    \begin{equation}\label{psi_c}
        \psi_c(\varphi, \theta ,t) = g(\theta) + \beta f(\theta) + Y(\varphi-ct, \theta) 
    \end{equation}
    solves the Euler equations~\eqref{Ew} 
    \begin{equation}\label{Ew_1}
         \left( \partial_t + \frac{1}{\cos\theta\sqrt{\sin^2\theta+b^2\cos^2\theta}}[-\partial_\theta\psi\partial_\varphi + \partial_\varphi\psi\partial_\theta] \right) \left(\Delta\psi+\frac{2\omega\sin\theta}{\sqrt{\sin^2\theta+b^2\cos^2\theta}}\right) =0. \nonumber \tag{$\mathcal{E}_\omega$}
    \end{equation}

    Since $\psi = g(\theta) + Y_l^m(\varphi,\theta)$ is a stationary solution of~\eqref{Ew_stationary1}, $g$ should satisfy
    \begin{equation}\label{g_travelling}
            -\lambda_{l,m}g = \Delta g + \frac{2\omega\sin\theta}{\sqrt{\sin^2\theta+b^2\cos^2\theta}},
    \end{equation}
    which can be implied from the ODE~\eqref{ODE1}.
    With the help of~\eqref{g_travelling}, we can compute
    \begin{equation}\label{Tra_Lap}
        \Delta\psi_c = -\lambda_{l,m}g+\beta\Delta f - \lambda_{l,m}Y_l^m - \frac{2\omega\sin\theta}{\sqrt{\sin^2\theta+b^2\cos^2\theta}}.
    \end{equation}
    Plugging~\eqref{Tra_Lap} into~\eqref{Ew_1}, we have
    \begin{equation}\label{f_travelling}
            \lambda_{l,m}\left(\frac{\partial Y_l^m}{\partial \varphi}\right)c + \frac{1}{\cos\theta\sqrt{\sin^2\theta+b^2\cos^2\theta}}\left(\frac{\partial Y_l^m}{\partial \varphi}\right)\beta \left( \lambda_{l,m} f' + (\Delta f)'\right) = 0.
    \end{equation}
    Then, by setting $\beta = c \lambda_{l,m}$ and $f$ to satisfy
    \begin{equation}\label{ODE_f}
        \lambda_{l,m}f' + \left( \Delta f \right)' = -\cos\theta\sqrt{\sin^2\theta+b^2\cos^2\theta},
    \end{equation}
    the equation~\eqref{f_travelling} holds, which means $\psi_c(\varphi,\theta,t)$ is a solution of~\eqref{Ew}.

    Note that the ODE~\eqref{ODE_f} can be implied from the ODE~\eqref{Integral_f}, whose existence can be proved through a similar method in Theorem~\ref{thm_existence}, providing $P(\theta)$ is odd, smooth and bounded.
    Similarly, the solution $f$ is odd and belongs to $C^{3,\alpha}\left((-\frac{\pi}{2},\frac{\pi}{2})\right)$.
\end{proof}

\begin{theorem}\label{thm_tra_uniq}
    for a given $Y_l^m \in \mathbb{E}_{l,m}$ and speed $c$, the constructed travelling-wave solution
    \[
    \psi_c(\varphi, \theta ,t) = g(\theta) + c \lambda_{l,m} f(\theta) + Y_l^m(\varphi-ct, \theta)
    \]
    is unique in terms of the velocity field $U = J \operatorname{grad}\psi$. 
\end{theorem}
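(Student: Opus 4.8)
The plan is to mirror the uniqueness argument of Theorem~\ref{thm_uniquess}, now carried out for the full zonal part of the traveling-wave profile. First I would pass to the co-moving variable $\xi=\varphi-ct$, so that a traveling-wave solution takes the stationary form $\psi_c = H(\theta)+Y_l^m(\xi,\theta)$ with zonal part $H(\theta):=g(\theta)+c\lambda_{l,m}f(\theta)$. Since $U=J\operatorname{grad}\psi_c$ depends on $\psi_c$ only through its gradient, additive constants are invisible to $U$, so it suffices to show that $H$ is determined by the fixed data $c$ and $Y_l^m$ up to an additive constant.

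Next I would record the ODE that $\psi_c$ being a solution of~\eqref{Ew} forces on $H$. Substituting the ansatz and using $\Delta Y_l^m=-\lambda_{l,m}Y_l^m$, the time derivative contributes $\lambda_{l,m}c\,\partial_\varphi Y_l^m$; in the Jacobian bracket the self-interaction of $Y_l^m$ vanishes, since the bracket of $Y_l^m$ with its own multiple $-\lambda_{l,m}Y_l^m$ is zero, leaving only the mixed contribution $(\partial_\varphi Y_l^m)\big[\lambda_{l,m}H'+(\Delta H)'+(2\omega\sin\theta/\rho)'\big]$, where $\rho(\theta)=\sqrt{\sin^2\theta+b^2\cos^2\theta}$. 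Factoring out the common factor $\partial_\varphi Y_l^m$, which is not identically zero, leaves the single second-order identity in $\theta$
\[
\lambda_{l,m}H'+(\Delta H)' = -\Big(\tfrac{2\omega\sin\theta}{\rho}\Big)' - \lambda_{l,m}c\cos\theta\,\rho,
\]
whose right-hand side depends only on the fixed data $c,\omega,b,\lambda_{l,m}$ (its second term is $-\lambda_{l,m}c\,P'(\theta)$, recovering the construction in Theorem~\ref{thm_travelling}).

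Then I would argue by contradiction exactly as in Theorem~\ref{thm_uniquess}. If $H_1$ and $H_2$ are two zonal parts associated with the same $c$ and the same $Y_l^m$, their difference $\tilde H=H_1-H_2$ satisfies the homogeneous equation $\lambda_{l,m}\tilde H'+(\Delta\tilde H)'=0$, which integrates to $-\lambda_{l,m}\tilde H = \Delta\tilde H + C$ for some constant $C$. This is precisely equation~\eqref{g_Tilde}: its trivial solution is $\tilde H=-C/\lambda_{l,m}$, and the difference $q(\theta)$ of any two solutions satisfies $-\Delta q=\lambda_{l,m}q$, so $q\in\mathbb{E}_{l,m}$. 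But $q$ depends only on $\theta$, whereas every nontrivial element of $\mathbb{E}_{l,m}$ (with basis $\{y_1(\theta)e^{im\varphi},y_2(\theta)e^{-im\varphi}\}$, $m\neq0$) depends on $\varphi$; hence $q\equiv0$. Therefore $\tilde H$ is constant, so $H$, and with it $\psi_c$, is unique up to an additive constant, and $U=J\operatorname{grad}\psi_c$ is unique.

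The main obstacle I expect is the bookkeeping in the reduction step: one must verify carefully that inserting the traveling-wave ansatz into the fully time-dependent equation~\eqref{Ew} yields, after factoring out $\partial_\varphi Y_l^m$, an identity in $\theta$ alone. The cancellation of the quadratic self-interaction of $Y_l^m$ is the only genuinely computational point and is what makes this reduction possible; once it is in hand, the eigenspace argument is identical to the stationary case and the conclusion is immediate, which is why the authors regard the proof as straightforward.
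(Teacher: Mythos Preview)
Your proposal is correct and follows exactly the route the paper intends: the paper's own proof is a two-line reference back to Theorem~\ref{thm_uniquess}, and you have faithfully unpacked that reference by showing the zonal part $H(\theta)=g(\theta)+c\lambda_{l,m}f(\theta)$ satisfies an inhomogeneous ODE whose right-hand side is fixed by the data, and then applying the eigenspace argument of Theorem~\ref{thm_uniquess} to the difference of two candidate zonal parts. The computational cancellation of the $Y_l^m$ self-interaction that you flag as the only delicate point is precisely the content of the reduction~\eqref{f_travelling} already carried out in the proof of Theorem~\ref{thm_travelling}.
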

\begin{proof}
    The proof is similar to Theorem~\ref{thm_uniquess}. It is straightforward to verify the zonal part $g(\theta)+c\lambda_{l,m}f(\theta)$ of $\psi_c(\varphi,\theta,t)$ is unique up to a constant. Then, the velocity field is uniquely determined. 
\end{proof}

\subsection{Instability of non-zonal Rossby-Haurwitz solutions on a rotating ellipsoid}\label{sec_instability}

The stability properties of both stationary and travelling-wave solutions are of great interest.
Non-zonal Rossby-Haurwitz solutions have been shown to be Lyapunov unstable on a rotating sphere (see Constantin and Germain~\cite{pierre2022}).
We will establish an analogous result on a rotating ellipsoid. 
We will only prove the instability of traveling-wave solutions, as stationary solutions can be viewed as a special case of traveling-wave solutions with traveling speed zero.
In the following, we will use $\psi(t)$ to denote $\psi_c(\varphi,\theta,t)$ for notation simplicity.

\begin{theorem}\label{Ins_Ross}
    The Non-zonal Rossby-Haurwitz solutions travelling with speed $c$
    \[
    \psi_c = g(\theta) + c\lambda_{l,m}f(\theta) + Y_l^m(\varphi-ct,\theta)
    \]
    are Lyapunov unstable. 
    
    Specifically, for a given $\psi_c(t)$ with initial data $\psi_c(0)$, there exists a sequence of perturbed waves $\psi^n_c(t)$ with initial data $\psi^n_c(0) \to \psi_c(0)$, such that
    \[
    \underset{n\to\infty}{\liminf} \left\{ \sup_{t>0}||{\psi^n_c}(t)-\psi_c(t)||^2_{L^2(\mathbb{S}^2,d\sigma)} \right\} \geq \epsilon > 0,
    \]
    for some $\epsilon \in \mathbb{R}^+$.
\end{theorem}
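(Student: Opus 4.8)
The plan is to mimic the strategy that works on the sphere: perturb the traveling speed by an amount that vanishes as $n\to\infty$, so that the initial data converge, yet let the resulting phase discrepancy accumulate in time until the two waves are maximally out of phase. Concretely, for each $n\in\mathbb{N}$ I would take $\psi^n_c$ to be the traveling-wave solution furnished by Theorem~\ref{thm_travelling} with speed $c_n=c+\tfrac{1}{n}$, that is,
\[
\psi^n_c(\varphi,\theta,t)=g(\theta)+c_n\lambda_{l,m}f(\theta)+Y_l^m(\varphi-c_n t,\theta).
\]
The point is that the same profiles $g$ and $f$ appear for every speed, since neither the ODE~\eqref{ODE1} defining $g$ nor the ODE~\eqref{Integral_f} defining $f$ involves $c$; hence each $\psi^n_c$ is a bona fide solution of~\eqref{Ew}.

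First I would verify the convergence of initial data. At $t=0$ the non-zonal parts agree, so
\[
\psi^n_c(0)-\psi_c(0)=(c_n-c)\lambda_{l,m}f(\theta)=\frac{\lambda_{l,m}}{n}f(\theta),
\]
and this tends to $0$ in $L^2(\mathbb{S}^2,d\sigma)$ because $f$ is fixed. For arbitrary $t$ the difference splits into a zonal piece and an oscillating piece,
\[
\psi^n_c(t)-\psi_c(t)=\frac{\lambda_{l,m}}{n}f(\theta)+\bigl[Y_l^m(\varphi-c_n t,\theta)-Y_l^m(\varphi-ct,\theta)\bigr].
\]
Since the first term is zonal while the bracket lies in $\mathbb{E}_{l,m}$ and thus carries the factors $e^{\pm im\varphi}$ with $m\neq0$, the two pieces are $L^2$-orthogonal and their squared norms add.

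The core computation expands the oscillating term in the orthonormal basis $\{y_1(\theta)e^{im\varphi},y_2(\theta)e^{-im\varphi}\}$ of $\mathbb{E}_{l,m}$ provided in Section~\ref{Laplace_elli}. Writing $Y_l^m=a_1y_1e^{im\varphi}+a_2y_2e^{-im\varphi}$, a time shift multiplies each mode by $e^{\mp imct}$, so orthonormality together with $\lvert e^{i\alpha}-1\rvert^2=4\sin^2(\alpha/2)$ and $c_n-c=\tfrac1n$ gives
\[
\bigl\|Y_l^m(\varphi-c_n t,\theta)-Y_l^m(\varphi-ct,\theta)\bigr\|_{L^2}^2=4\,\|Y_l^m\|_{L^2}^2\,\sin^2\!\Bigl(\frac{mt}{2n}\Bigr).
\]
Choosing $t_n=\frac{\pi n}{\lvert m\rvert}$ forces the sine factor to equal $1$, so that $\sup_{t>0}\|\psi^n_c(t)-\psi_c(t)\|_{L^2}^2\ge 4\|Y_l^m\|_{L^2}^2$ for every $n$. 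Taking the $\liminf$ and discarding the vanishing zonal contribution $\tfrac{\lambda_{l,m}}{n}\|f\|_{L^2}$ yields the instability with $\epsilon=4\|Y_l^m\|_{L^2}^2>0$.

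I do not expect a serious analytic obstacle; the whole argument is essentially bookkeeping once the spectral decomposition of $\mathbb{E}_{l,m}$ is in hand. The two places that genuinely need care are confirming that $g$ and $f$ are independent of the speed (so the perturbed waves really solve~\eqref{Ew}) and observing that the zonal remainder $\tfrac1n\lambda_{l,m}f$ is orthogonal to the oscillating modes. The conceptual heart is simply that the accumulated phase lag $\tfrac{mt}{2n}$, although driven by the vanishing speed gap $\tfrac1n$, can be made order one by selecting the time $t_n\sim n$, which is exactly what defeats Lyapunov stability.
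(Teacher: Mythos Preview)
Your proposal is correct and follows essentially the same strategy as the paper: perturb the traveling speed by an $O(1/n)$ amount so that the initial data converge while the accumulated phase lag in the $Y_l^m$ component can be driven to its maximum by a suitable choice of time. Your execution is in fact slightly cleaner than the paper's---you invoke the $L^2$-orthogonality of the zonal remainder $\tfrac{\lambda_{l,m}}{n}f$ against $\mathbb{E}_{l,m}$ to split the norm exactly and you retain both modes to obtain $\epsilon=4\|Y_l^m\|_{L^2}^2$, whereas the paper uses a cruder triangle-type inequality and keeps only the $a_1y_1$ contribution---but the underlying idea is identical.
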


\begin{proof}
    For a given travelling-wave solution
    \begin{equation}
        \psi_c(t) = g(\theta) + c\lambda_{l,m}f(\theta) + Y_l^m(\varphi-ct,\theta),
    \end{equation}
    with initial data $\psi_c(0)$, we construct a sequence of solutions $\psi^n_c(t)$ with initial data
    \begin{equation}
        {\psi_c^n(0)} = \psi_c(0) + \frac{1}{n}f(\theta) = g(\theta) + \left(c\lambda_{l,m}+\frac{1}{n}\right)f(\theta) + Y_l^m(\varphi,\theta).
    \end{equation}
    The solutions $\psi^n_c(t)$ of the Euler equations~\eqref{Ew} are
    \begin{equation}
        \psi^n_c(t) = g(\theta) + \left(c\lambda_{l,m}+\frac{1}{n}\right)f(\theta) + Y_l^m(\varphi-{c_n}t,\theta), \quad {c_n} = c + \frac{1}{n\lambda_{l,m}}
    \end{equation}

    Since $Y_l^m \in \mathbb{E}_{l,m}$, it can be decomposed to be
    \[ Y_l^m(\varphi, \theta) = a_1y_1(\theta) e^{im\varphi} + a_2y_2(\theta) e^{-im\varphi},\]
    for some $a_1,a_2\in\mathbb{R}$. Here, $\left\{y_1(\theta) e^{im\varphi},y_2(\theta) e^{-im\varphi}\right\}$ is the orthonormal basis of $\mathbb{E}_{l,m}$ (see Section~\ref{Laplace_elli}). 
    Without loss of generality, assuming $a_1\ne0$, $y_1 \not\equiv 0$, we have
    
    \begin{equation}
        \begin{split}
            &\quad\sup_{t>0}\left\|{\psi^n_c}(t)-\psi_c(t)\right\|^2_{L^2(\mathbb{S}^2,d\sigma)}\\
            &= \sup_{t>0} \left\| \frac{1}{n}f(\theta) + Y_l^m(\varphi-c_nt, \theta) - Y_l^m(\varphi-ct, \theta)\right\|^2_{L^2(\mathbb{S}^2,d\sigma)}\\
            &\geq\sup_{t>0} \left\| a_1y_1(\theta) (e^{im(\varphi-{c_n}t)} - e^{im(\varphi-ct)}) + a_2y_2(\theta) (e^{-im(\varphi-{c_n}t)} - e^{-im(\varphi-ct)})\right\|^2_{L^2(\mathbb{S}^2,d\sigma)}\\     &\quad - \frac{1}{n}\left\|f\right\|^2_{L^2(\mathbb{S}^2,d\sigma)}\\
            &= \sup_{t>0} \Bigr\{\left\| a_1y_1(\theta) (e^{im(\varphi-{c_n}t)} - e^{im(\varphi-ct)})\right\|^2_{L^2(\mathbb{S}^2,d\sigma)}\\
            &\quad + \left\|a_2y_2(\theta) (e^{-im(\varphi-{c_n}t)} - e^{-im(\varphi-ct)})\right\|^2_{L^2(\mathbb{S}^2,d\sigma)} \Bigr\}
            - \frac{1}{n}\left\|f\right\|^2_{L^2(\mathbb{S}^2,d\sigma)}\\
            &\geq  \sup_{t>0}\left\| a_1y_1(\theta) (e^{im(\varphi-c_nt)} - e^{im(\varphi-ct)}) \right\|^2_{L^2(\mathbb{S}^2,d\sigma)} - \frac{1}{n}\left\|f\right\|^2_{L^2(\mathbb{S}^2,d\sigma)}\\
            &= \sup_{t>0}\biggl\{ \left(\int_{-\frac{\pi}{2}}^{\frac{\pi}{2}}a_1^2y_1^2(\theta)\cos\theta\sqrt{\sin^2\theta+b^2\cos^2\theta} d\theta\right) \\
            &\quad\times\left(\int_{0}^{2\pi} \left|e^{im(\varphi-c_nt)} - e^{im(\varphi-ct)}\right|^2 d\varphi\right) \biggr\} - \frac{1}{n}\left\|f\right\|^2_{L^2(\mathbb{S}^2,d\sigma)}\\
            &=  \sup_{t>0}\left\{2\pi\left|1 - e^{im(c_n-c)t}\right|^2 \right\}\left( \int_{-\frac{\pi}{2}}^{\frac{\pi}{2}}a_1^2y_1^2(\theta)\cos\theta\sqrt{\sin^2\theta+b^2\cos^2\theta} d\theta \right)\\
            &\quad - \frac{1}{n}\left\|f\right\|^2_{L^2(\mathbb{S}^2,d\sigma)}\\
            &= 8\pi \left(\int_{-\frac{\pi}{2}}^{\frac{\pi}{2}}a_1^2y_1^2(\theta)\cos\theta\sqrt{\sin^2\theta+b^2\cos^2\theta} d\theta\right) - \frac{1}{n}\left\|f\right\|^2_{L^2(\mathbb{S}^2,d\sigma)}.
        \end{split}
    \end{equation}
    Since $||f||^2_{L^2(\mathbb{S}^2,d\sigma)} < \infty$, there exists $N \in \mathbb{N}$, such that for all $n \geq N$,
    \[ 
    \sup_{t \geq 0}||\psi^n_c(t)-\psi_c(t)||^2_{L^2(\mathbb{S}^2,d\sigma)} \geq 4\pi\int_{-\frac{\pi}{2}}^{\frac{\pi}{2}}a_1^2y_1^2(\theta)\cos\theta\sqrt{\sin^2\theta+b^2\cos^2\theta} d\theta > 0.
    \]
\end{proof}

\bmhead*{Data Availability}
Data sharing not applicable to this article as no datasets were generated or analysed during the current study.

\bmhead*{Statements and Declarations}
\bmhead*{Conflict of interest} 
The author states that there is no conflict of interest.

\begin{appendices}
\section{Results about integral equations}

The following two lemmas are used to prove Theorem~\ref{reduce}. In our setting, they are stated as follows.

\begin{lemma}[Variation-of-constants formula, see {Brunner~\cite[ch.\ 1.2.1, p.\ 10]{brunner2017volterra}}]\label{lemma_voc}
        Let $D$ to be a closed domain of kernel $K$, in the form
        \[
        D := \left\{(y, \theta): -\frac{\pi}{2} \leq \theta \leq y \leq 0 \right\}.
        \]
        For each constant $C$, assume that $r(y) \in C^1\left(\left[-\frac{\pi}{2}, 0\right]\right), \text{ s.t. }r\left(-\frac{\pi}{2}\right) = 0,$ and $K \in C(D)$. Then the unique solution $g \in C([-\frac{\pi}{2}, 0])$ of the Volterra integral equation
        \begin{equation}
            g(y) = \int_{-\frac{\pi}{2}}^{y} K(y,\theta)g(\theta) d\theta + r(y) + C, \quad y \in \left[-\frac{\pi}{2}, 0\right]
        \end{equation}
        is given by the variation-of-constants formula
        \begin{equation}\label{VOC}
            g(y) = S(y, -\frac{\pi}{2})C + \int_{-\frac{\pi}{2}}^{y} S(y,s)r'(s) ds, \quad y \in \left[-\frac{\pi}{2}, 0\right],
        \end{equation}
        where S(y,s) is the unique continuous solution of
        \begin{equation}\label{VIE_S}
            S(y,s) = 1 + \int_s^y K(y,v)S(v,s)dv, \quad (y,s) \in D.
            \end{equation} 
    \end{lemma}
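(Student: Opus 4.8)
The plan is to reduce the statement to the classical resolvent-kernel theory for linear Volterra integral equations of the second kind. First I would settle existence and uniqueness of a continuous solution $g$ on $\left[-\frac{\pi}{2},0\right]$ for each fixed $C$. Since $K$ is continuous on the compact triangular domain $D$ it is bounded, say $|K|\le M$, and the iterated kernels defined by $K_1=K$ and $K_{n+1}(y,\theta)=\int_\theta^y K(y,v)K_n(v,\theta)\,dv$ satisfy $|K_n(y,\theta)|\le M^n(y-\theta)^{n-1}/(n-1)!$, so the Neumann series $R(y,\theta)=\sum_{n\ge1}K_n(y,\theta)$ converges uniformly on $D$ to a continuous resolvent kernel obeying the resolvent identity $R(y,\theta)=K(y,\theta)+\int_\theta^y K(y,v)R(v,\theta)\,dv$. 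The unique continuous solution of the integral equation with forcing $f(y):=r(y)+C$ is then $g(y)=f(y)+\int_{-\frac{\pi}{2}}^y R(y,\theta)f(\theta)\,d\theta$; uniqueness follows because the difference of two solutions satisfies the homogeneous equation and vanishes by the standard iterated-kernel (Gronwall) estimate.

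Next I would connect $R$ to the kernel $S$ in the statement by setting $S(y,s):=1+\int_s^y R(y,v)\,dv$, which is continuous on $D$, and checking that it solves~\eqref{VIE_S}. Substituting this definition into the right-hand side of~\eqref{VIE_S}, expanding, and exchanging the order of integration in the resulting double integral over the region $s\le u\le v\le y$, the inner integral $\int_u^y K(y,v)R(v,u)\,dv$ may be replaced by $R(y,u)-K(y,u)$ via the resolvent identity; the $K$-terms then cancel and what remains is exactly $1+\int_s^y R(y,u)\,du=S(y,s)$. By the uniqueness established above, this identifies $S$ as the unique continuous solution of~\eqref{VIE_S}.

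Finally I would recast the resolvent solution formula into variation-of-constants form. Splitting $f=r+C$ gives $g(y)=r(y)+C+\int_{-\frac{\pi}{2}}^y R(y,\theta)\bigl(r(\theta)+C\bigr)\,d\theta$, and the $C$-terms assemble into $C\,S\!\left(y,-\frac{\pi}{2}\right)$ directly from the definition of $S$. For the remaining $r$-terms I would use that $r\in C^1$ with $r\!\left(-\frac{\pi}{2}\right)=0$: writing $r(\theta)=\int_{-\frac{\pi}{2}}^\theta r'(s)\,ds$ and applying Fubini over $-\frac{\pi}{2}\le s\le\theta\le y$ turns $r(y)+\int_{-\frac{\pi}{2}}^y R(y,\theta)r(\theta)\,d\theta$ into $\int_{-\frac{\pi}{2}}^y S(y,s)r'(s)\,ds$. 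Adding the two contributions produces precisely~\eqref{VOC}.

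The conceptual content here is standard, so I expect no deep obstacle; the delicate points are bookkeeping. The main thing to execute carefully is the repeated interchange of the order of integration on the triangular domain combined with the correct application of the resolvent identity, and---most importantly---the boundary condition $r\!\left(-\frac{\pi}{2}\right)=0$, without which the Fubini step would leave an extra boundary term $r\!\left(-\frac{\pi}{2}\right)S\!\left(y,-\frac{\pi}{2}\right)$ and the clean formula~\eqref{VOC} would break down.
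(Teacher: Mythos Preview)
Your argument is correct and is precisely the classical resolvent-kernel derivation one finds in Brunner's text: build the Neumann series, identify $S(y,s)=1+\int_s^y R(y,v)\,dv$ as the solution of~\eqref{VIE_S} via the resolvent identity, and then Fubini together with $r(-\tfrac{\pi}{2})=0$ to collapse the forcing term into $\int_{-\pi/2}^y S(y,s)r'(s)\,ds$. The paper itself does not give a proof of this lemma at all---it is stated in the appendix purely as a citation to~\cite{brunner2017volterra}---so there is no ``paper's own proof'' to compare against; you have simply supplied the standard textbook argument that the citation points to.
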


    \begin{lemma}[see Diethelm and Ford~\cite{diethelm2012volterra}]\label{Lemma2}
        Let the equation 
        \begin{equation}\label{lemma2_equ}
            u(y) = u_0 + \int_{-\frac{\pi}{2}}^{y} K(y,v)u(v)dv
        \end{equation}
        satisfy the following assumptions:
        \begin{enumerate}
            \item for every $-\frac{\pi}{2} \leq \tau_1 \leq \tau_2 \leq y$, the integral 
            \[ \int_{\tau_1}^{\tau_2} K(y,v)u(v)dv \]
            and
            \[ \int_{-\frac{\pi}{2}}^{y} K(y,v)u(v)dv \]
            are continuous functions of y.
            \item $K(y,\cdot)$ is absolutely integrable for all $y \in [-\frac{\pi}{2}, 0]$
            \item there exsit points $-\frac{\pi}{2} = Y_0 < Y_1 < Y_2 < \dots < Y_N = 0, \quad Y_i \in \mathbb{R}$, such that with $y \geq Y_i$,
            \[ \int_{Y_i}^{min(y, Y_{i+1})} |K(y,v)|dv \leq \gamma < \frac{1}{2} \]
            \item for every $y \geq -\frac{\pi}{2}$
            \[ \lim_{\delta \to 0^+} \int_{y}^{y+\delta} \left|K(y+\delta,v)\right|dv = 0 \]
        \end{enumerate}

        Then, the equation~\eqref{lemma2_equ} has a unique continuous solution. Furthermore, for every $c \in \mathbb{R}$, there exists precisely one value of $u_0 \in \mathbb{R}$ for which the solution $u$ of~\eqref{lemma2_equ} satisfies $u(0) = c$.
    \end{lemma}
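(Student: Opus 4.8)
The plan is to treat the two assertions separately: first the existence and uniqueness of a continuous solution, then the bijectivity of the terminal-value map $u_0 \mapsto u(0)$. The backbone of both is the subdivision $-\frac{\pi}{2}=Y_0<\cdots<Y_N=0$ furnished by assumption~(3), on whose subintervals the Volterra operator contracts with factor $\gamma<\frac12<1$.

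For existence and uniqueness I would march from left to right. On the first subinterval $[Y_0,Y_1]$ define
\[
(\mathcal{T}u)(y)=u_0+\int_{-\frac{\pi}{2}}^{y}K(y,v)u(v)\,dv .
\]
Assumptions~(1) and~(2) make the integral well defined, while assumption~(4) is exactly what forces the newly added mass $\int_{y}^{y+\delta}|K(y+\delta,v)|\,dv$ to vanish, so that $\mathcal{T}u$ is right-continuous and $\mathcal{T}$ maps $C([Y_0,Y_1])$ into itself continuously. Assumption~(3) then gives $\|\mathcal{T}u_1-\mathcal{T}u_2\|_\infty\le\gamma\|u_1-u_2\|_\infty$ with $\gamma<1$, so the Banach fixed-point theorem produces a unique continuous solution on $[Y_0,Y_1]$. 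I would then iterate: on $[Y_i,Y_{i+1}]$ the contribution $\int_{-\frac{\pi}{2}}^{Y_i}K(y,v)u(v)\,dv$ of the already-determined piece is a known continuous function of $y$ by~(1), which I absorb into the forcing; the same contraction estimate applies on the new subinterval, and the value at $Y_i$ produced from the right automatically matches the value carried over, so the pieces glue into a single continuous function. After $N$ steps this yields the unique continuous solution on $[-\frac{\pi}{2},0]$.

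For the terminal-value statement I would exploit linearity. Because the only inhomogeneity is the constant $u_0$, the solution depends linearly on it: $u=u_0\,S$, where $S$ solves the same equation with $u_0=1$, i.e. $S(y)=1+\int_{-\frac{\pi}{2}}^{y}K(y,v)S(v)\,dv$. Hence $u(0)=u_0\,S(0)$, and $u_0\mapsto u(0)$ is a bijection of $\mathbb{R}$ onto $\mathbb{R}$ — so that each $c$ is hit by precisely one $u_0=c/S(0)$ — if and only if $S(0)\ne0$. Equivalently, writing $w_0=u_0^{(1)}-u_0^{(2)}\ne0$ for two distinct data, I must show the difference $w=w_0S$ of the two solutions is nonvanishing: neighboring solutions never intersect.

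The main obstacle is precisely the non-vanishing of $S$ on all of $[-\frac{\pi}{2},0]$. On the first subinterval it is immediate: from $S(-\frac{\pi}{2})=1$ the contraction gives $\|S\|_{\infty,[Y_0,Y_1]}\le(1-\gamma)^{-1}$, whence $S(y)\ge\frac{1-2\gamma}{1-\gamma}>0$, and here the strict inequality $\gamma<\frac12$ (rather than merely $\gamma<1$) is indispensable. The difficulty is propagating this positivity across the remaining subintervals, since the naive per-subinterval estimate degrades once contributions from the earlier subintervals are reinserted. This is exactly the content of the ``neighboring solutions do not intersect'' theorem of Diethelm and Ford~\cite{diethelm2012volterra}, whose hypotheses are precisely assumptions~(1)--(4); I would therefore verify those hypotheses and invoke their refined estimate to conclude $S(0)\ne0$, which closes the terminal-value claim and hence the lemma.
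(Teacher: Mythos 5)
Your proposal is correct and, in substance, matches the paper's treatment: the paper states Lemma~\ref{Lemma2} without proof, importing it wholesale from Diethelm and Ford~\cite{diethelm2012volterra}, and your argument likewise delegates its only genuinely hard step --- propagating the non-vanishing of $S$ beyond the first subinterval, i.e.\ the ``neighboring solutions do not intersect'' property --- to that same reference. The material you add is sound (the piecewise Banach fixed-point march using assumption~(3), the linear reduction $u(0)=u_0\,S(0)$, and the first-interval bound $S\geq\frac{1-2\gamma}{1-\gamma}>0$, which correctly pinpoints why $\gamma<\frac12$ rather than merely $\gamma<1$ is needed); just be aware that, as a standalone proof, the closing citation is circular, since the lemma being proved \emph{is} that theorem of~\cite{diethelm2012volterra}.
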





\end{appendices}

\bibliography{sn-bibliography}

\end{document}